\documentclass[a4paper,11pt]{article}
\usepackage
[
colorlinks=true,
linkcolor=blue,
anchorcolor=blue,
citecolor=blue,
urlcolor=blue,
plainpages=false,
pdfpagelabels
]{hyperref}
\usepackage[english]{babel}
\usepackage[utf8]{inputenc}
\usepackage{amsmath,amssymb,amsthm}
\usepackage{txfonts}
\usepackage{graphicx}
\usepackage[colorinlistoftodos,textsize=scriptsize]{todonotes}
\usepackage{fullpage}
\usepackage[compact]{titlesec}
\usepackage[margin=0.5in]{caption}
\usepackage{url, inconsolata}
\usepackage{tikz, tikz-cd}
\usetikzlibrary{calc}
\usetikzlibrary{decorations.pathreplacing}
\usetikzlibrary{matrix,arrows,shapes,backgrounds,fit,positioning,calc,decorations.markings}
\tikzcdset{
arrow style=tikz,
diagrams={>={Straight Barb[scale=0.8]}}
}
\usepackage[capitalize]{cleveref}
\usepackage{xspace}
\usepackage{enumerate}
\usepackage[numbers,sort,compress]{natbib}
\usepackage{mathtools}

\newcommand\cat[1]{\ensuremath{\mathbf{#1}}}
\DeclareMathOperator{\im}{im}
\DeclareMathOperator{\coim}{coim}

\DeclareMathOperator{\coker}{coker}

\DeclareMathOperator{\rank}{rank}

\DeclareMathOperator{\id}{Id}

\DeclareMathOperator{\barccomp}{\bullet}

\newcommand{\barcEpi}{D^\twoheadrightarrow}
\newcommand{\barcMono}{D^{\hookrightarrow}}
\DeclareMathOperator{\card}{card}
\DeclareMathOperator{\supp}{supp}
\newcommand{\barcToDgm}{E}
\newcommand{\dgmToBarc}{F}

\newcommand{\kVect}[0]{\cat{Vect}}
\newcommand{\kvect}[0]{\cat{vect}}

\newcommand{\R}[0]{\mathbb R}

\newcommand{\IndMat}[0]{\mathcal X}

\newcommand{\A}[0]{\mathbf A}
\newcommand{\B}[0]{\mathcal B}
\newcommand{\C}[0]{\mathcal C}
\newcommand{\D}[0]{\mathcal D}

\newcommand{\I}[0]{\mathcal I}

\newcommand{\K}[0]{K}

\newcommand{\MatchingCat}[0]{\cat{Mch}}

\newcommand{\RCat}[0]{\cat{R}}

\newcommand{\Barc}[0]{\cat{Barc}}

\newcommand{\ShiftMap}[2]{S^{#1,#2}}

\newcommand{\pfd}{p.f.d.\@\xspace}

\usetikzlibrary{matrix,arrows,positioning,calc,decorations.markings}
\tikzset{strike thru/.style={
    decoration={markings, mark=at position 0.5 with {
        \draw [-] 
            ++ ( 0pt,-1.25pt) 
            -- ( 0pt, 1.25pt);}
    },
    postaction={decorate},
}}

\makeatletter
\newcommand*{\matching}{%
  \mathrel{%
    \mathpalette\@vneq{\rightarrow}%
  }%
}
\newcommand*{\@vneq}[2]{%
  \sbox0{\raisebox{\depth}{$#1\neq$}}%
  \sbox2{\raisebox{\depth}{$#1|\,\m@th$}}%
  \ifdim\ht2>\ht0 %
    \sbox2{\resizebox{\vneqxscale\width}{\vneqyscale\ht0}{\unhbox2}}%
  \fi
  \sbox2{$\m@th#1\vcenter{\copy2}$}%
  \ooalign{%
    \hfil\phantom{\copy2}\hfil\cr
    \hfil$#1#2\m@th$\hfil\cr
    \hfil\copy2\hfil\cr
  }%
}
\newcommand*{\vneqxscale}{1}
\newcommand*{\vneqyscale}{.3}
\makeatother


\renewcommand*{\matching}{
\to
}

\newtheorem{theorem}{Theorem}
\numberwithin{theorem}{section}
\newtheorem{proposition}[theorem]{Proposition}
\newtheorem{lemma}[theorem]{Lemma} 
\newtheorem{corollary}[theorem]{Corollary}

\theoremstyle{definition}
\newtheorem{example}[theorem]{Example}
\newtheorem{remark}[theorem]{Remark}

\newtheorem{definition}[theorem]{Definition}

\definecolor{myblue}{rgb}{0.22,0.45,0.70}
\definecolor{myred}{RGB}{228,36,20}
\definecolor{mygreen}{RGB}{40,168,55}
\definecolor{myyellow}{RGB}{255,204,4}
\definecolor{mypurple}{RGB}{160,80,154}
\definecolor{mygray}{gray}{0.85}
\definecolor{mylightgray}{gray}{0.9}

\title{Persistence Diagrams as Diagrams: \\ A Categorification of the Stability Theorem}
\author{
Ulrich Bauer%
\thanks{Technical University of Munich, Germany.
\url{mail@ulrich-bauer.org}}
\and Michael Lesnick%
\thanks{SUNY Albany, USA.
\url{mlesnick@albany.edu}}
}
\begin{document}
\maketitle
\begin{abstract}
Persistent homology, a central tool of topological data analysis, provides invariants of data called barcodes (also known as persistence diagrams).  A barcode is simply a multiset of intervals on the real line.  Recent work of Edelsbrunner, Jabłoński, and Mrozek suggests an equivalent description of barcodes as functors $\RCat\to \MatchingCat$, where $\RCat$ is the poset category of real numbers and $\MatchingCat$ is the category whose objects are sets and whose morphisms are matchings (i.e., partial injective functions).
Such functors form a category $\MatchingCat^\RCat$ whose morphisms are the natural transformations.  Thus, this interpretation of barcodes gives us a hitherto unstudied categorical structure on barcodes.  We show that this categorical structure leads to surprisingly simple reformulations of both the well-known stability theorem for persistent homology and a recent generalization called the induced matching theorem.  These reformulations make clear for the first time that both of these results can be understood as the preservation of certain categorical structure.  We also show that this perspective leads to a more systematic variant of the proof of the induced matching theorem.
\end{abstract}

\section{Introduction}\label{Sec:Intro}
The stability theorem for persistent homology is one of the main results of topological data analysis (TDA).  It plays a key role in the statistical foundations of TDA \cite{balakrishnan2013statistical}, and is used to formulate theoretical guarantees for efficient algorithms to approximately compute persistent homology \cite{Sheehy2013Linear,cavanna2015geometric}.  The theorem is originally due to Cohen-Steiner et al., who presented a version of the theorem for the persistent homology of $\R$-valued functions \cite{cohen2007stability}.  Since then, the theorem has been revisited a number of times, leading to simpler proofs and more general formulations \cite{chazal2009proximity,chazal2012structure,Bubenik2014Categorification,bauer2015induced,lesnick2015theory,Botnan2018Algebraic,bjerkevik2016stability,Bubenik2015Generalized}. In particular, Chazal et al.\@ introduced the \emph{algebraic stability theorem} \cite{chazal2009proximity}, a useful and elegant algebraic generalization, and it was later observed that the (easy) converse to this result also holds \cite{lesnick2015theory}. Bubenik and Scott were the first to explore the category-theoretic aspects of the stability theorem, rephrasing some of the key definitions in terms of functors and natural transformations \cite{Bubenik2014Categorification}.

Letting $\kvect$ denote the category of finite dimensional vector spaces over a fixed field~$\K$, a \emph{pointwise finite dimensional (\pfd) persistence module} is an object of the functor category $\kvect^\RCat$.  The structure theorem for \pfd
persistence modules \cite{crawley2015decomposition} tells us that the isomorphism type of a \pfd persistence module
$M$ is completely described by a unique collection of intervals called the \emph{barcode} $\B(M)$.
This barcode specifies how $M$ decomposes into indecomposable summands; such a decomposition is essentially unique.  The algebraic stability theorem, together with its converse, tells us that two persistence modules are algebraically similar (in a sense made precise by the language of \emph{interleavings}) if and only if they have similar barcodes.  

In \cite{bauer2015induced}, the authors of the present paper introduced the \emph{induced matching theorem}, an extension of the algebraic stability theorem to a general result about morphisms
of persistence modules, with a new, more direct proof.  The present paper is intended as a follow-up to \cite{bauer2015induced}.  The induced matching theorem can be viewed as a categorification of the stability theorem, and while this viewpoint was already present in \cite{bauer2015induced}, it was not fully developed.  Our goal here is to complete the development of the categorical viewpoint on induced matchings and algebraic stability.  %
 In order to make this paper self-contained, we revisit some of the same territory as \cite{bauer2015induced} along the way, leveraging the categorical perspective to streamline the presentation.

To formulate and prove the induced matching theorem, in \cite{bauer2015induced} we considered the category whose objects are barcodes and whose morphisms are arbitrary matchings (i.e., partial injective functions).
In the present paper, we introduce a different category of barcodes, denoted by $\Barc$, for which the morphisms are only those matchings satisfying a certain simple condition on how the matched intervals overlap.
We observe that there exists an equivalence of categories $\barcToDgm:\Barc\to\MatchingCat^\RCat$ extending the correspondence between barcodes and functors $\RCat\to \MatchingCat$ given by Edelsbrunner, Jab{\l}onski, and Mrozek \cite{edelsbrunner2013persistent}.  We use the category $\Barc$ to further develop the categorical viewpoint on stability.

Thanks to the equivalence $\barcToDgm$, it turns out all of the categorical structure of $\kvect^\RCat$ relevant to algebraic stability (as treated in \cite{bauer2015induced}) has an analogue in $\Barc$.  This allows us to present simple reformulations of both the induced matching and algebraic stability theorems, which make clear for the first time that both results can be understood as the preservation of certain categorical structure upon passing from persistence modules to barcodes.  Moreover, we show that this viewpoint leads naturally to a more systematic variant of the proof of the induced matching theorem (albeit one closely related to the proof given in \cite{bauer2015induced}).%

\subsection{Reformulation of the Induced Matching Theorem}
\label{sub:reformulationIMT}

\paragraph{Induced matchings}

To state the induced matching theorem, we need to first define a morphism of barcodes in $\Barc$ \[\IndMat(f):\B(M) \to \B(N)\]  induced by a morphism $f:M\to N$ of \pfd persistence modules.  This is called the \emph{induced matching of~$f$}.  To define $\IndMat(f)$, one first gives the definition in the case that $f$ is a monomorphism or epimorphism; see \cref{Sec: Induced_Matcings_Monos_Epis} for the details.  

For any category $\mathbf C$, let $\mathbf C^{\hookrightarrow}$ denote the subcategory with the same objects and morphisms the monomorphisms.  Similarly, let $\mathbf C^\twoheadrightarrow$ denote the subcategory with the same objects and morphisms the epimorphisms.  
The following result is equivalent to \cite[Proposition 4.2]{bauer2015induced}; we provide two different proofs, in \cref{Sec: Induced_Matcings_Monos_Epis,sec:rank_barcode}.
\begin{theorem}[Induced matchings for monos and epis]
\label{prop:SubmoduleStructureCat}
\mbox{}
\begin{enumerate}[(i)]
\item
The matchings induced by monomorphisms define a functor $\IndMat:(\kvect^\RCat)^{\hookrightarrow} \to \Barc^{\hookrightarrow}$.
\item 
Dually, the matchings induced by epimorphisms define a functor $\IndMat:(\kvect^\RCat)^\twoheadrightarrow \to \Barc^\twoheadrightarrow$.
\end{enumerate}
\end{theorem}

To extend the definition of the induced matchings $\IndMat(f)$ to arbitrary morphisms $f:M\to N$ of \pfd persistence modules, we take $\IndMat(f)=\IndMat(i)\circ \IndMat(q)$, where
\[M \stackrel{q}\twoheadrightarrow \im f\stackrel{i}\hookrightarrow N\]
is the epi-mono factorization of $f$.  Note that when $f$ is a monomorphism or epimorphism, this definition of coincides with the one given in \cref{prop:SubmoduleStructureCat} above.

\begin{remark}
The map $f\mapsto\protect \IndMat(f)$ is not functorial on all of $\kvect^\RCat$ \cite[Example 5.6]{bauer2015induced}, though it is functorial on the subcategories of monos and epis. Indeed, it is impossible to extend the map $M \mapsto \B(M)$ to a functor from $\kvect$ to $\Barc$ \cite[Proposition 5.10]{bauer2015induced}. 
\end{remark}

A morphism $f$ in $\kvect^\RCat$ is a monomorphism (epimorphism) if and only if $f$ has a trivial kernel (respectively, cokernel), and it can be checked that the same is true as well for a morphism $f$ in $\Barc$.  Thus, \cref{prop:SubmoduleStructureCat} tells us that the matchings induced by morphisms with trivial (co)kernels also have trivial (co)kernels. As formulated in this paper, the induced matching theorem is a generalization of this statement to small (but not necessarily trivial) (co)kernels.  

To make this precise, we need the following definition: 

\begin{definition}[$\delta$-trivial morphisms]
For $\A$ a pointed category (i.e., a category with a zero object) and $\delta\geq 0$, we say that a diagram $M:\RCat\to \A$ is \emph{$\delta$-trivial} if for all $t \in \R$, the internal morphism $M_{t,t+\delta}:M_t\to M_{t+\delta}$ is a zero morphism, i.e., it factors through the zero object.  The empty set is the zero object in $\MatchingCat$; we say a barcode $\C$ is $\delta$-trivial if $\barcToDgm(\C)$ is $\delta$-trivial.  
\end{definition}
Note that $M=0$ if and only if $M$ is $0$-trivial.  Using the definition of the equivalence $\barcToDgm$ given below in \cref{Subsec:BarcodesAsDiagrams}, it is straightforward to check that a barcode $\C$ is $\delta$-trivial if and only if each interval of $\C$ is contained in some half-open interval of length $\delta$.  
Moreover, a persistence module $M$ is $\delta$-trivial if and only if $\B(M)$ is $\delta$-trivial.

\begin{theorem}[Categorical formulation of the Induced Matching Theorem]\label{Thm:IMT_New_Formulation}
For any morphism $f:M\to N$ of \pfd persistence modules, the induced matching $\IndMat(f):\B(M)\to \B(N)$ is a morphism in $\Barc$ such that
\begin{enumerate}[(i)]
\item if $f$ has $\delta$-trivial kernel, then so does $\IndMat(f)$, and
\item if $f$ has $\delta$-trivial cokernel, then so does $\IndMat(f)$.
\end{enumerate}
\end{theorem}

Note that taking $\delta=0$ in \cref{Thm:IMT_New_Formulation}, we recover \cref{prop:SubmoduleStructureCat}.  In \cref{sec:IMT}, we give a concrete formulation of the induced matching theorem (\cref{Thm:InducedMatching}), similar to the version appearing in \cite{bauer2015induced}, and explain why the two formulations are equivalent.

\begin{remark}\label{Remark:Functoriality_Is_The_Heart}
In both the proof of the induced matching theorem given in \cite{bauer2015induced} and the proof given in the present paper, the first step is to prove \cref{prop:SubmoduleStructureCat}.  In this paper, we show that the proof of \cref{Thm:IMT_New_Formulation} follows readily from \cref{prop:SubmoduleStructureCat} and a simple characterization of the $\delta$-triviality condition for functors $\RCat\to \mathbf A$ taking values in a \emph{Puppe-exact category} $\mathbf A$; see \cref{Def:P_Exact,prop:imageSandwich}. 
\end{remark}
  
\begin{remark}
\cref{Thm:IMT_New_Formulation} has a simple converse, which we give in \cref{Prop:Conv_IMT}.
\end{remark}
  
\subsection{Reformulation of the Algebraic Stability Theorem}
We next turn to our reformulation of the algebraic stability theorem.  The theorem is typically formulated using the \emph{interleaving distance $d_I$} on persistence modules and the \emph{bottleneck distance} $d_B$ on barcodes; see \cref{Sec:Algebraic_Stability} for the definition.  Here, we use the categorical structure on barcodes to state the algebraic theorem purely in terms of interleavings of $\RCat$-indexed diagrams, 
without explicitly introducing $d_B$.  

Interleavings and the interleaving distance $d_I$ can be defined on $\RCat$-indexed diagrams taking values in an arbitrary category; see \cref{Def:Interleaving_Distance}.  By way of the equivalence $E$, we thus obtain definitions of interleavings and $d_I$ on $\Barc$; see \cref{SubSec:Interleavings_On_Barcodes}.
Our \cref{Prop:dI=db} establishes that the distances $d_I$ and $d_B$ on barcodes are equal; in fact, we give a slightly sharper statement.
From \cref{Prop:dI=db} it follows that the forward and converse algebraic stability theorems, as stated in \cite{bauer2015induced}, can be rephrased as follows:

\begin{theorem}[Categorical formulation of Algebraic Stability]\label{Thm:AST_New_Formulation}
Two \pfd persistence modules $M$ and $N$ are $\delta$-interleaved if and only if their barcodes $\B(M)$ and $\B(N)$ are $\delta$-interleaved.  In \mbox{particular,} \[d_I(M,N)=d_I(\B(M),\B(N)).\]
\end{theorem}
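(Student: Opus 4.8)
The plan is to obtain \cref{Thm:AST_New_Formulation} as a formal consequence of \cref{Prop:dI=db} together with the forward and converse algebraic stability results, as stated in \cite{bauer2015induced}; the real content has already been packaged into \cref{Prop:dI=db}, so what remains is to chain three biconditionals. First I would recall the two inputs in the form I need them. From \cref{Prop:dI=db} (in its sharp form), for any barcodes $\C$ and $\D$ and any $\delta\ge 0$, the diagrams $E(\C)$ and $E(\D)$ are $\delta$-interleaved in $\MatchingCat^\RCat$ if and only if there is a $\delta$-matching between $\C$ and $\D$ in the usual sense of bottleneck matchings. From the algebraic stability theorem and its converse, as stated in \cite{bauer2015induced}, the \pfd modules $M$ and $N$ are $\delta$-interleaved if and only if there is a $\delta$-matching between $\B{M}$ and $\B{N}$.

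Setting $\C=\B{M}$ and $\D=\B{N}$ and composing the two biconditionals then gives, for every $\delta\ge 0$,
\[
M \text{ and } N \text{ are } \delta\text{-interleaved}
\iff \text{there is a } \delta\text{-matching between } \B{M},\ \B{N}
\iff \B{M},\ \B{N} \text{ are } \delta\text{-interleaved},
\]
which is the first assertion of the theorem. For the distance identity I would then invoke the definition of $d_I$ as an infimum over interleaving parameters together with the convention $d_I(\C,\D):=d_I(E(\C),E(\D))$: applying the displayed equivalence inside the infimum yields
\[
d_I(M,N)=\inf\{\delta\ge 0: M,N \text{ are } \delta\text{-interleaved}\}=\inf\{\delta\ge 0: \B{M},\B{N} \text{ are } \delta\text{-interleaved}\}=d_I(\B{M},\B{N}),
\]
and this common value also equals the usual bottleneck distance $d_b(\B{M},\B{N})$ by \cref{Prop:dI=db}.

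I do not expect a genuine obstacle here: once \cref{Prop:dI=db} is in hand, the whole argument is a one-line composition of biconditionals followed by passing to an infimum. The only point requiring attention is bookkeeping with the parameter $\delta$ --- making sure the notion of ``$\delta$-matching'' appearing in the stability results of \cite{bauer2015induced} is literally the same as the one appearing in \cref{Prop:dI=db}, and that all three statements are quantified over the same $\delta$ with no hidden $\varepsilon$-slack --- after which the biconditional holds verbatim for each fixed $\delta$ and the distance statement follows since an infimum depends only on the set of admissible $\delta$. All of the substance therefore sits in \cref{Prop:dI=db}, whose proof must unwind the definition of the equivalence $E$ and of interleavings of $\MatchingCat$-valued diagrams and verify that these encode exactly bottleneck matchings of barcodes.
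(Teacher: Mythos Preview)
Your proposal is correct and matches the paper's own argument essentially verbatim: the paper also derives \cref{Thm:AST_New_Formulation} by combining \cref{Prop:dI=db} with the forward and converse algebraic stability theorem of \cite{bauer2015induced} (restated here as \cref{Thm:Isometry_1D}), noting that the two statements are equivalent once \cref{Prop:dI=db} is in hand. Your remarks about checking that the $\delta$-matching notions coincide are appropriate caution, but no discrepancy arises.
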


As we show in \cref{Sec:Algebraic_Stability}, this formulation of algebraic stability follows easily from \cref{Thm:IMT_New_Formulation}.

\subsection{Directly constructing barcodes and induced matchings of persistence modules as matching diagrams } 
In view of the equivalence $E:\Barc\to \MatchingCat^\RCat$, one may wonder whether one can give simple constructions of barcodes of persistence modules and induced matchings directly in the category $\MatchingCat^{\RCat}$.  In the final part of this paper, we explore this question.  Given a persistence module $M$, we give a direct construction of a matching diagram $\barcEpi(M)$ which is equivalent to the usual barcode of $M$.  $\barcEpi(M)$ is defined only in terms of the ranks of the linear maps in $M$; the definition does not depend on the structure theorem for persistence modules.  $\barcEpi(M)$ has the appealing property that the sets $\barcEpi(M)_r$ at each index $r$ are defined in an especially simple way, namely \[\barcEpi(M)_r=\{1,2,\ldots, \dim M_r\}.\]  We observe that, given an epimorphism of persistence modules $f:M\twoheadrightarrow N$, the matching induced by $f$ has a simple description as a natural transformation \[\barcEpi(f):\barcEpi(M) \twoheadrightarrow \barcEpi(N),\] and this leads to an alternate proof of \cref{prop:SubmoduleStructureCat}\,(ii).  There seems to be no comparably simple, direct description of the matching induced 
by a monomorphism $f:M \hookrightarrow N$ as a natural transformation $\barcEpi(M)\to \barcEpi(N)$.  But we observe that the matching diagram $\barcEpi(M)$ has a dual $\barcMono(M)$, also equivalent to the usual barcode, such that the matching induced by a monomorphism $f:M\hookrightarrow N$ has a simple description as a natural transformation \[\barcMono(f):\barcMono(M)\hookrightarrow \barcMono(N),\] leading (dually) to an alternate proof of \cref{prop:SubmoduleStructureCat}\,(i).

\subsection{Organization of the paper}
We begin \cref{Sec:Barcodes_As_Diagrams} by examining the properties of the category $\MatchingCat^\RCat$.  We then give the precise definitions of our category of barcodes $\Barc$ and of the equivalence $E:\Barc\to \MatchingCat^\RCat$.  As applications of this equivalence, we give a concrete description of (co)kernels and images in $\Barc$, and we describe how the $\delta$-triviality of the (co)kernel of a morphism $f:\mathcal C\to \mathcal D$ in $\Barc$ controls the similarity between $\mathcal C$ and $\mathcal D$.   In \cref{sec:IMT}, we use these descriptions to show that our categorical formulation of the induced matching theorem (\cref{Thm:IMT_New_Formulation}) is equivalent to a concrete formulation similar to that appearing in \cite{bauer2015induced}.  We then complete the definition of induced matchings and give our proof of the induced matching theorem.  In \cref{Sec:Interleavings_On_Barcodes}, we give the details of our reformulation of the algebraic stability theorem, and we prove that this follows easily from the induced matching theorem.  \cref{sec:rank_barcode} discusses the construction of barcodes and induced matchings directly in $\MatchingCat^{\RCat}$.

\section{Barcodes as diagrams}\label{Sec:Barcodes_As_Diagrams}

\subsection{Properties of $\MatchingCat$ and $\MatchingCat^\RCat$}
\label{Subsec:MatchingsPExact}
First, we review some basic properties of the category $\MatchingCat$ having sets as objects and matchings (partial injective functions) as morphisms. 
$\MatchingCat$ is a subcategory of the category  with sets as objects and relations as morphisms.
The composition $\tau\circ\sigma:S\matching U$ of two matchings $\sigma:S\matching T$ and $\tau:T\matching U$ is thus defined as
\begin{align*}
\tau\circ\sigma=\{(s,u) \mid (s,t) \in \sigma,\ (t,u) \in \tau \text{ for some } t \in T \}.
\end{align*}
The monomorphisms in $\MatchingCat$ are the injections, while the epimorphisms are the coinjections, i.e., matchings which match each element of the target. The kernel and cokernel of a morphism in $\MatchingCat$ consist of the unmatched elements of the source and target, respectively, together with the canonical (co)injections. Similarly, the image and coimage consist of the matched elements.

\begin{figure}
\centering
\begin{tikzpicture}[scale=0.5,baseline=-0.4ex,outer sep=0pt,inner sep=0pt]
\draw[thick,mygreen] (0,0) -- (2,0);
\draw[thick,mygreen] (0,-1) -- (2,-2);
\draw[thick,mygreen] (2,-1) -- (4,-1);
\draw[thick,mygreen] (2,-2) -- (4,-0);
\filldraw[myblue]
(0,0) circle (3pt) node(a){}
(0,-1) circle (3pt) node(b){}
(0,-2) circle (3pt) node(c){}
;
\filldraw[myred]
(2,0) circle (3pt) node(d){}
(2,-1) circle (3pt) node(e){}
(2,-2) circle (3pt) node(f){}
(2,-3) circle (3pt) node(g){}
(2,-4) circle (3pt) node(h){}
;
\filldraw[myyellow]
(4,0) circle (3pt) node(i){}
(4,-1) circle (3pt) node(j){}
(4,-2) circle (3pt) node(k){}
(4,-3) circle (3pt) node(l){}
;

\draw [->] (5.5,-1) -- (6.5,-1); 

\draw[thick,mygreen] (8,-1) -- (10,0);
\filldraw[myblue]
(8,0) circle (3pt) node(aa){}
(8,-1) circle (3pt) node(bb){}
(8,-2) circle (3pt) node(cc){};
\filldraw[myyellow]
(10,0) circle (3pt) node(hh){}
(10,-1) circle (3pt) node(ii){}
(10,-2) circle (3pt) node(jj){};

\begin{pgfonlayer}{background}
  \node (L) [fit=(a)(b)(c),style={rounded corners, inner sep=4pt, fill=myblue!30}] {};
  \node (M) [fit=(d)(e)(f)(g)(h),style={rounded corners, inner sep=4pt, fill=myred!30}] {};
  \node (R) [fit=(i)(j)(k)(l),style={rounded corners, inner sep=4pt, fill=myyellow!30}] {};

  \node (LL) [fit=(aa)(bb)(cc),style={rounded corners, inner sep=4pt, fill=myblue!30}] {};
  \node (RR) [fit=(hh)(ii)(jj),style={rounded corners, inner sep=4pt, fill=myyellow!30}] {};
\end{pgfonlayer}

\end{tikzpicture}
\qquad \qquad\qquad
\begin{tikzpicture}[scale=0.65,baseline=-0.4ex,outer sep=0pt,inner sep=0pt,label distance=2pt]
\filldraw[myblue]
(0,0) circle (3pt) node(a){}
(0,-1) circle (3pt) node(b){}
(0,-2) circle (3pt) node(c){};
\filldraw[myred]
(2,-1) circle (3pt) node(d){}
(2,-2) circle (3pt) node(e){}
(2,-3) circle (3pt) node(f){}
(2,-4) circle (3pt) node(g){};

\begin{pgfonlayer}{background}
  \node (coim) [fit=(b)(c),style={rounded corners, inner sep=4pt, fill=myblue!30},label=left:{\small$\coim f$}] {};
  \node (ker) [fit=(a),style={rounded corners, inner sep=4pt, fill=mygray},label=left:{\small$\ker f$}] {};
  \node(im) [fit=(d)(e),style={rounded corners, inner sep=4pt, fill=myred!30},label=right:{\small$\im f$}] {};
  \node(coker) [fit=(f)(g),style={rounded corners, inner sep=4pt, fill=mygray},label=right:{\small$\coker f$}] {};
\draw[thick,mygreen] (b) -- (d);
\draw[thick,mygreen] (c) -- (e);
\end{pgfonlayer}

\end{tikzpicture}
\caption{Examples illustrating matchings as a category. 
Left: the composition of two matchings. 
Right: kernel, coimage, image, and cokernel of a matching.}
\label{fig:matchings}
\end{figure}

\paragraph{$\MatchingCat$ and $\MatchingCat^\RCat$ as Puppe-exact categories}  The category $\MatchingCat$ is not Abelian: it does not have all binary (co)products, and is not even pre-additive.  Nevertheless, $\MatchingCat$ does share some structural similarities with an Abelian category.  In specific, $\MatchingCat$ is a \emph{Puppe-exact category}:

\begin{definition}\label{Def:P_Exact}
A Puppe-exact category \cite{grandis2012homological,Puppe1962Korrespondenzen,mitchell1965theory} 
is a category with the following properties: 
\begin{enumerate}
\item it has a zero object,
\item it has all kernels and cokernels,
\item every monomorphism is a kernel, and every epimorphism is a cokernel,
\item every morphism $f$ has an epi-mono factorization.
\end{enumerate}
\end{definition}

Every Abelian category is Puppe-exact, and it has been shown in \cite{grandis2012homological} that significant portions of homological algebra can be developed for Puppe-exact categories.

It follows from the definition that a Puppe-exact category also has all (co)images.  
Just like in Abelian categories, we have that \[\im f = \ker \coker f,\quad \coim f = \coker \ker f,\] and the coimage is canonically isomorphic to the image.  Moreover, the epi-mono factorization of a morphism $f$ is through $\im f$, and is essentially unique.  

For any category $\mathbf C$ and Puppe-exact category $\mathbf A$, the category of functors $\mathbf C\to \mathbf A$ is also Puppe-exact.  
Thus, $\MatchingCat^\RCat$ is Puppe-exact.  In particular, it has all kernels, cokernels, and images, and these are given pointwise.

\subsection{Barcodes}\label{Sec:Barcodes}

\begin{definition}[Multiset representations]
We say a \emph{multiset representation} is a subset $T\subseteq S\times X$ of sets $S$ and $X$, called the \emph{base set} and the \emph{indexing set} respectively.
For $s\in S$, the \emph{multiplicity} of $s$ in $T$ is the cardinality of the \emph{local indexing set} $X_s = \{x\in X\mid (s,x)\in T\}$.  In \cite{bauer2015induced}, we considered a more restrictive definition of a multiset representation, where the indexing set $X$ is $\mathbb N=\{1,2,3,\ldots\}$ and each local indexing set $X_s$ is required to be a prefix of $\mathbb N$; we refer to this as a \emph{natural multiset representation}.  (Using the more general definition here allows us to establish the link between barcodes and matching diagrams without imposing any cardinality conditions on the matching diagrams.)
\end{definition}

Let $T$ and $T'$ be multiset representations with the same indexing set $S$ and respective base sets $X$ and $X'$.  We say $T'$ \emph{reindexes} $T$, and write $T\cong T'$,
if there exists a bijection $f:T\to T'$ such that for all  $(s,x)\in T$, $f(s,x)=(s,x')$ for some $x'\in X'$.  Note that $\cong$ is an equivalence relation on multiset representations.

\begin{definition}[Barcode] An \emph{interval} in $\R$ is a non-empty set $I\subset \R$ such that if  $a,c\in I$ and $a<b<c$, then $b\in I$.  A \emph{barcode} is a multiset representation whose base set consists of intervals in $\R$.   If the barcode is a natural multiset representation, we call it a \emph{natural barcode}.  
\end{definition}
In working with barcodes, we often abuse notation slightly by suppressing the indexing set, and write an element $(s,x)$ of a barcode simply as $s$. 

\paragraph{Barcodes of Persistence Modules}
For $I$ an interval, define the \emph{interval module} $\K^I$ to be the persistence module such that 
\begin{align*}
\K^I_r&=
\begin{cases}
\K &{\textup{if }} r\in I, \\
0 &{\textup{ otherwise}.}
\end{cases}
& \K^I_{r,s}=
\begin{cases}
\id_\K &{\textup{if }} r,s\in I,\\
0 &{\textup{ otherwise}.}
\end{cases}
\end{align*}

The following well-known theorem tells us that natural barcodes arise as complete isomorphism invariants of \pfd persistence modules.

\begin{theorem}[Structure of \pfd persistence modules \cite{crawley2015decomposition}]
\label{thm:decompositionPFD}
For any \pfd persistence module $M$, there exists a unique natural barcode $\B(M)$ such that %
\[M\cong \bigoplus_{I\in \B(M)} \K^I.\] 
\end{theorem}
Following \cite{chazal2012structure}, we call this barcode $\B(M)$ the \emph{decomposition barcode} of $M$, or simply the barcode of $M$.

\subsection{The category of barcodes}
For intervals $I,J\subseteq \R$, we say that \emph{$I$ bounds $J$ above}
if for all $s \in J$ there exists $t \in I$ with $s \leq t$. 
If additionally $J$ bounds $I$ above, we say that $I$ and $J$ \emph{coincide above}.
Symmetrically, we say that \emph{$J$ bounds $I$ below}
if for all $t \in I$ there exists $s \in J$ with $s \leq t$, and that $I$ and $J$ \emph{coincide below} if additionally $I$ bounds $J$ below.
We say that
\emph{$I$ overlaps $J$ above} (and symmetrically, \emph{$J$ overlaps $I$ below}) if each of the following three conditions hold:
\begin{itemize}
\item $I\cap J\ne \emptyset$,
\item $I$ bounds $J$ above, and
\item $J$ bounds $I$ below.
\end{itemize}
For example, $[1,3)$ overlaps $[0,2)$ above, but neither $[0,4)$ nor $[0,2)$ overlap $[1,3)$ above. 
\begin{figure}
\centering

\begin{tikzpicture}[scale=.5]

\fill[mygray] (3,0) rectangle (6,-1);
\fill[mygray] (2,-1) rectangle (5,-2);
\fill[lightgray] (3,0) rectangle (5,-2);
\draw[ultra thick,myblue] (3,0) -- (9,0);
\node[myblue] at (9,0) [right] {$I$};
\draw[ultra thick,myred] (2,-1) -- (6,-1);
\node[myred] at (6,-1) [right] {$J$};
\draw[ultra thick,mygreen] (0,-2) -- (5,-2);
\node[mygreen] at (5,-2) [right] {$K$};

\end{tikzpicture}
\qquad
\begin{tikzpicture}[scale=.5]

\fill[mygray] (5,0) rectangle (6,-1);
\fill[mygray] (2,-1) rectangle (3,-2);
\draw[ultra thick,myblue] (5,0) -- (9,0);
\node[myblue] at (9,0) [right] {$I$};
\draw[ultra thick,myred] (2,-1) -- (6,-1);
\node[myred] at (6,-1) [right] {$J$};
\draw[ultra thick,mygreen] (0,-2) -- (3,-2);
\node[mygreen] at (3,-2) [right] {$K$};

\end{tikzpicture}
\caption{Illustration of overlap matchings and their composition.  Both the left and right examples depict overlap matchings $\sigma:\B\matching \C$ and $\tau:\C\matching \D$
between single-interval barcodes $\B = \{I\},$  $\C = \{J\},$ $\D  = \{K\}$, with $\sigma = \{(I,J)\},$ $\tau = \{(J,K)\}$.
We have $(I,K) \in \tau \barccomp \sigma$ if and only if $I\cap K \neq \emptyset$, so $\tau \barccomp \sigma=\{(I,K)\}$ for the left example, but $\tau \barccomp \sigma=\emptyset$ for the right example.
}
\label{fig:overlap_matchings}
\end{figure}

\begin{definition}[The category of barcodes]
We define an \emph{overlap matching} between barcodes $\C$ and $\D$ to be a matching 
 $\sigma:\C\matching \D$ such that if $\sigma(I)=J$, then $I$ overlaps $J$ above.  
Note that if $\sigma:\B\matching \C$ and $\tau:\C\matching \D$ are both overlap matchings, then the composition $\tau\circ \sigma$ in $\MatchingCat$ is not necessarily an overlap matching; for intervals $I,J,K$ such that $I$ overlaps $J$ above, and $J$ overlaps $K$ above, it may be that $I\cap K=\emptyset$, so that $I$ does not overlap $K$ above.

We thus define the \emph{overlap composition} $\tau \barccomp \sigma$ of overlap matchings $\sigma$ and $\tau$ as the matching
\begin{align*}
\tau\barccomp \sigma=\{(I,K)\in \tau\circ\sigma\mid \text{$I$ overlaps $K$ above.}\}
\end{align*}
See \cref{fig:overlap_matchings} for an illustration.  It is easy to check that with this new definition of composition, the barcodes and overlap matchings form a category, which we denote as $\mathbf{Barc}$.  
\end{definition}
Note that two barcodes are isomorphic in $\Barc$ if and only if one~reindexes the other.  Note also that the empty barcode is the zero object in $\Barc$.

\subsection{Barcodes as diagrams}
\label{Subsec:BarcodesAsDiagrams}

\paragraph{Functor from barcodes to diagrams}
We now define the equivalence $\barcToDgm:\Barc\to \MatchingCat^\RCat$.  For $\mathcal D$~a barcode and~$t\in \R$, we let 
\[\barcToDgm(\mathcal D)_t := \{I \in \mathcal D \mid t \in I\},\] and for each~$s\leq t$ we define the internal matching $\barcToDgm(\mathcal D)_{s,t}: \barcToDgm(\mathcal D)_s \matching \barcToDgm(\mathcal D)_t$ 
to be the restriction of the diagonal of $\mathcal D \times \mathcal D$ to $\barcToDgm(\mathcal D)_s \cap \barcToDgm(\mathcal D)_t$, i.e., \[\barcToDgm(\mathcal D)_{s,t}:=\{(I,I) \mid I \in \mathcal D, \; s,t \in I\}.\]  See \cref{fig:barcode_matchings} for an illustration.  

We define the action of $\barcToDgm$ on morphisms in $\Barc$ in the obvious way: for $\sigma:\C\matching \D$ an overlap matching and~$t\in \R$, we let $\barcToDgm(\sigma)_t: \barcToDgm(\C)_t \matching \barcToDgm(\mathcal D)_t$ be the restriction of $\sigma$ to pairs of intervals both containing~$t$, i.e.,
\[\barcToDgm(\sigma)_t:=\{(I,J) \in \sigma \mid t \in I \cap J \}. \]
 It is straightforward to check that $\barcToDgm$ is indeed a functor.
\begin{figure}
\centering
\begin{tikzpicture}[scale=1,x=4ex,y=5ex]

\draw[ultra thick,myblue] (0,0) -- (8,0);
\draw[ultra thick,gray] (1,-.25) -- (5,-.25);
\draw[ultra thick,myblue] (2,-.5) -- (6,-.5);
\draw[ultra thick,gray] (2,-.75) -- (4,-.75);
\draw[ultra thick,myblue] (4,-1) -- (10,-1);
\draw[ultra thick,myblue] (5,-1.25) -- (8,-1.25);

\draw[gray] (6,.5) -- (6,-1.75);

\node[gray] (t) at (6,-2.25) {$\barcToDgm(\C)_t$};

\filldraw[myblue]
(6,0) circle (2pt)
(6,-.5) circle (2pt)
(6,-1) circle (2pt)
(6,-1.25) circle (2pt);

\end{tikzpicture}
\qquad
\begin{tikzpicture}[scale=1,x=4ex,y=5ex]

\draw[ultra thick,myred] (0,0) -- (8,0);
\draw[ultra thick,gray] (1,-.25) -- (5,-.25);
\draw[ultra thick,myred] (2,-.5) -- (6,-.5);
\draw[ultra thick,gray] (2,-.75) -- (4,-.75);
\draw[ultra thick,gray] (4,-1) -- (10,-1);
\draw[ultra thick,gray] (5,-1.25) -- (8,-1.25);

\draw[gray] (3,.5) -- (3,-1.75);
\node[gray] (s) at (3,-2.25) {$\barcToDgm(\C)_s$};

\draw[gray] (6,.5) -- (6,-1.75);
\node[gray] (t) at (6,-2.25) {$\barcToDgm(\C)_t$};

\filldraw[myred]
(3,0) circle (2pt)
(3,-.5) circle (2pt)

(6,0) circle (2pt)
(6,-.5) circle (2pt);

\filldraw[gray]
(3,-.25) circle (2pt)
(3,-.75) circle (2pt)
(6,-1) circle (2pt)
(6,-1.25) circle (2pt);

\end{tikzpicture}
\caption{Examples illustrating the matching diagram representation $\barcToDgm(\C)$ of a barcode $\C$.  
Left: The intervals of $\barcToDgm_t(\C)$ are shown in blue (left).  Right: The intervals of $\coim \barcToDgm(\C)_{s,t}=\im \barcToDgm(\C)_{s,t}$ are shown in red.
}
\label{fig:barcode_matchings}
\end{figure}

\paragraph{Functor from Diagrams to Barcodes}
To see that $\barcToDgm$ is an equivalence, we next define a functor $F:\MatchingCat^\RCat\to \Barc$ such that $\barcToDgm$ and $F$ are inverses (up to natural isomorphism).  

For $D: \RCat \to \MatchingCat$, let
\[
\mathcal \dgmToBarc(D) := \left(\bigcup_{t\in\R} {\{t\} \times D_t} \right) \bigg/ \sim
\]
where $(t,x) \sim (u,y )$ if and only if $(x,y) \in D_{t,u}$ or $(y,x) \in  D_{u,t}$.  The functoriality of $D$ implies that the projection onto the first coordinate $(t,x) \mapsto t$ necessarily maps each equivalence class $Q \in \mathcal \dgmToBarc(D)$ to an interval $\supp(Q) = \{t \mid (t,x) \in Q \}\subseteq \R$.  We thus may define the barcode $\dgmToBarc(D)$ by
\[\dgmToBarc(D):=\{(\supp(Q),Q) \mid Q\in \mathcal \dgmToBarc(D)\},\]
where we interpret the above expression as a multiset representation by taking the index of each interval $\supp(Q)$ to be the equivalence class $Q$.
We take the action of $F$ on morphisms to be the obvious one: for diagrams $C,D: \RCat \to \MatchingCat$ and  $\eta:C \to D$ a natural transformation (consisting of a family of matchings $\eta_t: C_t \matching D_t$), we take $\dgmToBarc(\eta): \dgmToBarc(C) \matching \dgmToBarc(D)$ to be the overlap matching given by
\[\dgmToBarc(\eta):=\left\{\left((\supp(Q),Q),(\supp(R),R)\right) \mid Q \in \mathcal \dgmToBarc(C),\  R \in \mathcal \dgmToBarc(D),  \exists\ t\in \R,\ (x,y) \in \eta_t : (t,x) \in Q, (t,y) \in R \right\}. \]
It is easy to check that $F$ is a functor and that $\barcToDgm$ and $F$ are indeed inverses up to natural isomorphism.

\subsection{Kernels, cokernels, and images of barcodes}
In the induced matching approach to algebraic stability, (co)kernels and $\delta$-triviality of persistence modules both play an essential role.  We have seen above that the definitions of these extend to functor categories $\mathbf A^\RCat$ for any Puppe-exact category $\mathbf A$; in particular, they extend to $\MatchingCat^\RCat$.  Thus, since $\MatchingCat$ is equivalent to $\Barc$, these definitions also carry over to $\Barc$. 

We next give concrete descriptions of kernels, cokernels, and images in $\Barc$.  We then use these to obtain a simple description of how the $\delta$-triviality of the (co)kernel of a morphism $f:\mathcal C\to \mathcal D$ in $\Barc$ controls the similarity between $\mathcal C$ and $\mathcal D$. %

For $\sigma:\C\matching \D$ an overlap matching of barcodes and $I\in \C$, define 
 \[\ker(\sigma,I)=
 \begin{cases} 
 I &\textup{if } \sigma \textup{ does not match }I,\\ 
 I \setminus J & \textup{if } \sigma(I)=J.
 \end{cases}\]
Hence, $\ker(\sigma,I)$ is either empty or an interval in $\R$. 
In the latter case, $I$ and  $\ker(\sigma,I)$ coincide above.
Dually, for $J\in \D$, we define 
 \[\coker(\sigma,J)=
 \begin{cases} 
 J &\textup{if } \sigma \textup{ does not match }J,\\ 
 J \setminus I & \textup{if } \sigma(I)=J.
 \end{cases}\]
\begin{proposition}\label{Prop:Kers_Cokers_of_Barcodes}
For any morphism (i.e., overlap matching) $\sigma:\C\matching \D$ in $\Barc$,
the categorical kernel, cokernel, and image of $\sigma$ exist and are given by 
\begin{align*}
 \ker \sigma&= \{\ker(\sigma,I) \neq \emptyset \mid {I\in \C}\}, \\
 \quad \coker \sigma&= \{\coker(\sigma,J) \neq \emptyset \mid {J\in \D}\},\\
 \im \sigma&= \{ I \cap J \mid {(I,J)\in \sigma}\}.
 \end{align*}
 \end{proposition}
 
\begin{proof}
Given $\sigma:\C\matching \D$ in $\Barc$, applying the equivalence $E$ yields a morphism of matching diagrams $E(\sigma)$ such that 
\[(\ker E(\sigma))_t=\{I\in \C\mid t\in I\textup{ and $I$ is not matched by $\sigma$ to an interval $J \in \D$ with $t \in J$}\}.\]
It is then clear that \[F(\ker E(\sigma))= \{\ker(\sigma,I) \neq \emptyset \mid {I\in \C}\}.\]
Since \[\ker \sigma \cong F\circ E(\ker \sigma)\cong F(\ker(E(\sigma))),\] the result for kernels holds.  
Similar arguments give the results for cokernels and for images.
\end{proof}

Using this concrete description of (co)kernels in $\Barc$, we now give an explicit description of the notion of $\delta$-triviality for (co)kernels of overlap matchings.  %
Given an interval $I \subset \R$ and $\delta\geq 0$, let
\begin{equation}\label{Eq:Interval_Shift}
I(\delta) := \{t \mid t +\delta \in I\}
\end{equation}
be the interval obtained by shifting $I$
downward by $\delta$.

\begin{proposition}\label{Prop:Triviality_Kers_Cokers_of_Matching_Diagram}
Let $\eta:\C\to \D$ an overlap matching of barcodes.
Then
\begin{enumerate}[(i)]
\item $\ker \eta$ is $\delta$-trivial if and only if
\begin{enumerate}
\item for each $(I,J) \in \eta$, $J$ bounds $I(\delta)$ above, and
\item any interval of $\C$ that is not matched by $\eta$ is contained in a half-open interval of length $\delta$.
\end{enumerate}
\item $\coker \eta$ is $\delta$-trivial if and only if
\begin{enumerate}
\item for each $(I,J) \in \eta$, $I(\delta)$ bounds $J$ below, and
\item any interval of $\D$ that is not matched by $\eta$ is contained in a half-open interval of length $\delta$.
\end{enumerate}
\end{enumerate}
\end{proposition}
\begin{proof}
As noted in \cref{sub:reformulationIMT}, a barcode $\mathcal C$ is $\delta$-trivial if and only if each interval in $\mathcal C$ is contained in a half-open interval of length $\delta$.  Given this, the result follows immediately from \cref{Prop:Kers_Cokers_of_Barcodes}.
\end{proof}

Recall that a morphism has $0$-trivial (co)kernel if and only if it is a monomorphism (epimorphism).  We thus have the following corollary of \cref{Prop:Triviality_Kers_Cokers_of_Matching_Diagram}, which gives a concrete interpretation of \cref{prop:SubmoduleStructureCat}:
\begin{corollary}%
\label{Thm:SubmoduleStructure}
Let $\eta:\C\to \D$ an overlap matching of barcodes. Then
\begin{enumerate}[(i)]
\item $\eta$ is a monomorphism if and only if
\begin{enumerate}
\item for each $(I,J) \in \eta$, $I$ and $J$ coincide above, and
\item every interval of $\C$ is matched (i.e., $\eta$ is an injection).
\end{enumerate}
\item $\eta$ is an epimorphism if and only if
\begin{enumerate}
\item for each $(I,J) \in \eta$, $I$ and $J$ coincide below, and
\item every interval of $\D$ is matched (i.e., $\eta$ is a coinjection).
\end{enumerate}
\end{enumerate}
\end{corollary}

\section{The induced matching theorem}
In this section, we observe that the categorical formulation of the induced matching theorem (\cref{Thm:IMT_New_Formulation}) is equivalent to a more concrete statement, similar to the formulation appearing in \cite{bauer2015induced}. We then define the matchings induced by epimorphisms and monomorphisms of persistence modules, thereby completing the definition of induced matchings given in \cref{sub:reformulationIMT}.   To finish the section, we prove the induced matching theorem, working directly with the categorical formulation of the theorem.

\label{sec:IMT}

\subsection{Concrete formulation of the induced matching theorem}

It follows from \cref{Prop:Kers_Cokers_of_Barcodes,Prop:Triviality_Kers_Cokers_of_Matching_Diagram} that our categorical reformulation of the induced matching theorem (\cref{Thm:IMT_New_Formulation}) is equivalent to the following.  %
 See \cref{fig:trivialKernel} for an illustration.

\begin{theorem}[Induced Matching Theorem \cite{bauer2015induced}]\label{Thm:InducedMatching}
Let $f:M\to N$ be a morphism of \pfd persistence modules.
\begin{enumerate}[(i)]
\item The induced matching $\IndMat(f):\B(M)\matching \B(N)$ is an overlap matching.
\item If $\ker f$ is $\delta$-trivial, then
\begin{enumerate}
\item for each $(I,J) \in \IndMat(f)$, $J$ bounds $I(\delta)$ above, and
\item any interval of $\B(M)$ not matched by $\IndMat(f)$ is contained in a half-open interval of length $\delta$.
\end{enumerate}
\item If $\coker f$ is $\delta$-trivial, then 
\begin{enumerate}
\item for each $(I,J) \in \IndMat(f)$, $I(\delta)$ bounds $J$ below, and
\item any interval of $\B(N)$ not matched by $\IndMat(f)$ is contained in a half-open interval of length $\delta$.
\end{enumerate}
\end{enumerate}
\end{theorem}

\begin{figure}
\centering
\begin{tikzpicture}[scale=.5,baseline=(bottom),x=1cm,y=1.25cm]
\draw[ultra thick,myyellow] (5,0) -- (7,0);
\draw[ultra thick,myyellow] (5.5,-0.25) -- (6,-0.25);
\node[myyellow] at (8,0) [right] {$\B(\ker f)$};
\draw[lightgray,dash pattern=on 0pt off 2pt,line cap=round,thick] (2,-1) -- (2,-2);
\draw[ultra thick,myblue] (2,-1) -- (7,-1);
\draw[ultra thick,myblue] (5,-1.25) -- (6,-1.25);
\node[myblue] at (8,-1) [right] {$\B(M)$};
\node[myblue] at (2,-1) [left] {$I$};
\draw[lightgray,dash pattern=on 0pt off 2pt,line cap=round,thick] (5.5,-2) -- (5.5,-3);
\draw[ultra thick,mygreen] (2,-2) -- (5.5,-2);
\node[mygreen] at (8,-2) [right] {$\B(\im f)$};
\draw[ultra thick,myred] (0.5,-3) -- (5.5,-3);
\node[myred] (bottom) at (8,-3) [right] {$\B(N)$};
\node[myred] (bottom) at (0.5,-3) [left] {$J$};
\draw[ultra thick,myblue] (0,-4) -- (5,-4);
\draw[ultra thick,myblue] (2.5,-4.25) -- (4,-4.25);
\node[myblue] (bottom) at (8,-4) [right] {$\B(M(\delta))$};
\node[myblue] at (-1,-4) [left] {$I(\delta)$};
\end{tikzpicture}
\caption{Illustration for part (ii) of the induced matching theorem: the right endpoint of the interval $J \in \B(N)$ coincides with that of an interval in $\B(\im f)$ and lies between the right endpoint of the interval $I \in \B(M)$ and that of the shifted interval $I(\delta) \in \B(M(\delta))$.}
\label{fig:trivialKernel}
\end{figure}

\subsection{Matchings induced by monos and epis of persistence modules}\label{Sec: Induced_Matcings_Monos_Epis}

We now define the matching $\IndMat(f)$ induced by a monomorphism or epimorphism $f$ of persistence modules.  The way we will present the definition will depend on a structural result, \cref{Prop:Ind_Matchings_for_Monos} below, which also leads almost immediately to a proof of \cref{prop:SubmoduleStructureCat}.

Let $\mathcal \I$ denote the set of intervals in $\R$.  For $I,J\in \mathcal I$, write $I\sim_a J$ if $I$ and $J$ coincide above.  $\sim_a $ is an equivalence relation on $\I$.  For $\B$ a barcode, $\sim_a$ induces an equivalence relation on $\B$, which we also denote as $\sim_a$.  For each equivalence class $e\in \mathcal I/{\sim_a}$, let $\B^e$ denote the corresponding equivalence class of $\B/{\sim_a}$ if $\B$ contains any intervals in $e$.  Otherwise let $\B^e=\emptyset$.  If $\B$ is the barcode of a \pfd module, then each $\B^e$ is finite or countable.  In addition, if $\B^e$ is non-empty then it contains a maximal interval under inclusion.  We endow $\B^e$ with a total order by taking $(I,n)<(J,n')$ if $I$ strictly contains $J$ or $I=J$ and $n<n'$.  $\B^e$ is then a countable, well-ordered set, hence isomorphic to a prefix of $\mathbb N$.

\begin{proposition}[Induced Matchings for Monos]\label{Prop:Ind_Matchings_for_Monos}
If $f:M\to N$ is a monomorphism of persistence modules, then 
\begin{enumerate}
\item[(i)] for each $e\in \I/{\sim_a}$, \[|\B(M)^e|\leq|\B(N)^e|.\]  Thus, we have a well defined injection $\IndMat(f):\B(M)  \hookrightarrow \B(N)$, which sends the $i^{th}$ element of $\B(M)^e$ to the $i^{th}$ element of $\B(N)^e$.
\item[(ii)] $\IndMat(f)$ is in fact a monomorphism in $\Barc$.
\end{enumerate}
\end{proposition}

A simple proof of \cref{Prop:Ind_Matchings_for_Monos} is given in \cite[Section 4]{bauer2015induced}.  Here, we present a variant of that argument.

\begin{proof}[Proof of \cref{Prop:Ind_Matchings_for_Monos}]
For any interval $I\subset \R$, we define a functor $F^I:\kvect^\RCat\to \kvect$ such that 
\begin{enumerate}
\item for all \pfd persistence modules $M$, $\dim F^I(M)$ is the number of intervals in $\B(M)$ which contain $I$ and coincide with $I$ above, and
\item $F^I$ maps monomorphisms to monomorphisms.
\end{enumerate}
To define $F^I$, we choose $t\in I$ and let 
\[\ker^+= \bigcap_{u\not \in I: t<u} \ker M_{t,u}, \quad  \ker^-= \bigcup_{u \in I: t \leq u} \ker M_{t,u}, \quad \im^+=\bigcap_{s\in I: s \leq t} \im M_{s,t}.\]
We take \[F^I(M)=(\ker^+\cap \im^+)/(\ker^-\cap \im^+).\]
The map $M\mapsto F^I(M)$ is easily checked to be functorial.
From the structure theorem \ref{thm:decompositionPFD}, it is clear that $\dim F^I(M)$ has the desired property, and it is straightforward to check that $F$ preserves monomorphisms.

The proposition follows easily from the existence of the functors $F^I$: Let $I$ be the $j^\mathrm{th}$ interval in $\B(M)^e$.  We have \[j\leq \dim F^I(M)\leq \dim F^I(N)\leq |\B(N)^e|.\]  If $\B(M)^e$ is finite, then taking $j=|\B(M)^e|$ gives that $|\B(M)^e|\leq|\B(N)^e|$.  If $\B(M)^e$ is countably infinite, then we have that $j\leq |\B(N)^e|$ for all $j\geq 0$, hence $|\B(N)^e|$ is infinite as well.  This proves (i).

To prove (ii), note that for each $I\in \B(M)$, $I$ and $\IndMat(f)(I)$ coincide above, so in view of \cref{Thm:SubmoduleStructure}, it suffices to show that $I\subset \IndMat(f)(I)$.  Suppose that $I$ is the $j^\mathrm{th}$ interval in $\B(M)^e$.  Since $\dim F^I(M)\leq \dim F^I(N)$, $\B(N)^e$ has at least $j$ intervals containing $I$.  $\IndMat(f)(I)$ is by definition the $j^\mathrm{th}$ interval of $\B(N)^e$, so we have $I\subset \IndMat(f)(I)$, as desired.
\end{proof}

To define $\IndMat(f)$ for an epimorphism $f$, we simply dualize the above construction, taking two intervals to be equivalent if and only if they coincide below.  The dual argument shows that $\IndMat(f)$ is an epimorphism in $\Barc$.

\begin{proof}[Proof of \cref{prop:SubmoduleStructureCat} (induced matchings for monos and epis)]
It is easy to see that the map $f\mapsto \IndMat(f)$ of the \cref{Prop:Ind_Matchings_for_Monos} is in fact functorial, so this defines a functor $\IndMat$ from monomorphisms of persistence modules to monomorphisms in $\Barc$, proving \cref{prop:SubmoduleStructureCat}\,(i).  The dual observation yields  \cref{prop:SubmoduleStructureCat}\,(ii).  
\end{proof}

\begin{example}
Interestingly, the map $f\mapsto\protect \IndMat(f)$ may strictly decrease the triviality of (co)kernels: we give an example of a monomorphism $f : M \hookrightarrow N$ such that $\coker f$ is not $2$-trivial but $\coker \IndMat(f)$ is $2$-trivial.  Let \[M = K^{[2,4)},\quad N= K^{[0,4)} \oplus K^{[1,3)},\quad \textup{and}\quad f = \begin{pmatrix} 1\\1 \end{pmatrix}.\] Then $\B(\coker f) = \{[0,3),[1,2)\}$ but $\coker \IndMat(f) = \{[0,2),[1,3)\}$.
In contrast, note that for any morphism $f$, we have by construction that $\im \IndMat(f) = \B(\im f)$.  
\end{example}

\subsection{A characterization of morphisms with $\delta$-trivial (co)kernel}\label{Sec:Characterization}
We now turn our attention to the proof of the induced matching theorem.  First, we introduce some notation.

\paragraph{Shifts of $\RCat$-indexed diagrams and barcodes}
Consider the translation $t \mapsto t + \delta$ of the real line by $\delta \in \R$ as an endofunctor $S_\delta: \RCat \to \RCat$.  For any category $\A$ and diagram $M: \RCat \to \A$, we write 
$M(\delta) := M \circ S_\delta$.  
Thus, $M(\delta)$ is the diagram obtained by shifting each vector space and linear map in $M$ downward by $\delta$.  Given $M,N:\RCat \to \A$, a morphism $f:M\to N$ induces a morphism $f(\delta):M(\delta)\to N(\delta)$.  

For $\delta\geq 0$, the internal morphisms $\{M_{t,t+\delta}\}_{t\in \R}$ assemble into a natural transformation $M\to M(\delta)$, which we denote by $\ShiftMap{M}{\delta}$.  Note that since $(M(-\delta))(\delta)=M$, we have a natural transformation $\ShiftMap{M(-\delta)}{\delta}:M(-\delta)\to M$.

For $\C$ a barcode, let \[\C(\delta):=\{I(\delta)\mid I\in \C\},\] where $I(\delta)$ is as defined in \cref{Eq:Interval_Shift}, and let $\ShiftMap{\C}{\delta}:\C\to \C(\delta)$ be the overlap matching given by 
\[\ShiftMap{\C}{\delta}:=\{(I,I(\delta))\mid I\textup{ is not }\delta\textup{-trivial}\}.\]  Note that for $E:\Barc\to \MatchingCat^\RCat$ the equivalence of \cref{Subsec:BarcodesAsDiagrams}, $E(\ShiftMap{\C}{\delta})=\ShiftMap{E(\C)}{\delta}$.%

The following proposition is one of the key ingredients in our proof of the induced matching theorem:

\label{Subsec:deltaTrivial}
\begin{lemma}
\label{prop:imageSandwich}
Given diagrams $M, N: \RCat \to \A$ with $\A$ Puppe-exact, and a morphism $f: M \to N$ 
with epi-mono factorization
\[M \stackrel{q}\twoheadrightarrow \im f \stackrel{i}\hookrightarrow N,\] 
the following are equivalent:
\begin{enumerate}[(i)]
\item $\ker f$ is $\delta$-trivial;

\item the image epimorphism $r:M \twoheadrightarrow \im \ShiftMap{M}{\delta}$ factors as
\[
\begin{tikzcd}
M \ar[r,two heads,"r"] \ar[d,two heads,"q",swap] & \im \ShiftMap{M}{\delta} \\
\im f \ar[ur,two heads,"p",swap,dashed]
\end{tikzcd}
\]
for some epimorphism $p: \im f \twoheadrightarrow \im \ShiftMap{M}{\delta}$.

\end{enumerate}
Dually, the following are equivalent:
\begin{enumerate}[(i)]
\item $\coker f$ is $\delta$-trivial;

\item the image monomorphism $h:\im (\ShiftMap{N(-\delta)}{\delta}) \hookrightarrow N$ factors as
\[
\begin{tikzcd}
\im \ShiftMap{N(-\delta)}{\delta} \ar[r,hook,"h"] \ar[dr,hook,"j",swap,dashed] &N \\
& \im f \ar[u,hook,"i",swap]
\end{tikzcd}
\]
for some monomorphism $j: \im \ShiftMap{N(-\delta)}{\delta} \hookrightarrow \im f$.

\end{enumerate}
\end{lemma}

\begin{proof}
We give the proof for $\ker f$, the dual case of $\coker f$ being analogous.  Let \[\kappa : \ker f \hookrightarrow M\quad \textup{and} \quad\mu: \ker {\ShiftMap{M}{\delta}} \hookrightarrow M\] denote the kernel monomorphisms, and let \[q : M \twoheadrightarrow \im f\quad \textup{and} \quad r: M \twoheadrightarrow \im {\ShiftMap{M}{\delta}}\] denote the image epimorphisms.

To show that (i) implies (ii), assume that  $\ker f$ is $\delta$-trivial, i.e.,
\[\ShiftMap{\ker f}{\delta}:\ker f \to \ker f(\delta)\] is the zero morphism.
Then we also have 
\[\ShiftMap{M}{\delta} \circ \kappa = \kappa(\delta) \circ \ShiftMap{\ker f}{\delta}=0.\]
The universal property of the kernel monomorphism $\mu$ thus provides a unique morphism
$v:\ker f \to \ker {\ShiftMap{M}{\delta}}$ such that $\kappa = \mu \circ v$. 
\[
\begin{tikzcd}
& \ker f \ar[r] \ar[dl,dashed,hook,swap,"\exists! v"] \ar[d,hook,"\kappa"] & 0 \ar[r]  \ar[d] & \ker f(\delta) \ar[d,hook,"\kappa(\delta)"] \\
\ker {\ShiftMap{M}{\delta}} \ar[r,hook,"\mu"] & M \ar[r,two heads,"r"] \ar[d,two heads,"q"] & \im {\ShiftMap{M}{\delta}} \ar[r,hook] & M(\delta) \\
& \im f \ar[ur,dashed,two heads,swap,"\exists!p"] 
\end{tikzcd}
\]
Since $\kappa$ is a monomorphism, $v$ must be a monomorphism too.  
We have $r \circ \mu=0$,
so \[ r \circ \kappa = r \circ \mu  \circ v=0\] as well.
Now by the universal property of $q$ as the cokernel epimorphism of $\kappa$,
there is a unique epimorphism $p: \im f \to \im {\ShiftMap{M}{\delta}}$ such that $r = p \circ q$.

To show that (ii) implies (i),
assume that there is an epimorphism $p$ factoring $r = p \circ q$.
We have $q \circ \kappa=0$, 
so \[r \circ \kappa = p \circ q \circ \kappa=0\] as well.
Thus \[\ShiftMap{M}{\delta} \circ \kappa = \kappa(\delta) \circ \ShiftMap{\ker f}{\delta}=0,\] and since $\kappa(\delta)$ is a monomorphism, this implies that $\ShiftMap{\ker f}{\delta}=0$.
\end{proof}

\subsection{Proof of the induced matching theorem}
To prove the induced matching theorem (\cref{Thm:IMT_New_Formulation}) we will need the following lemma, which follows easily from the definition of induced matchings and the structure theorem for persistence modules (\cref{thm:decompositionPFD}).
\begin{lemma}\label{Lem:Matching_Induced_By_Shifts}
For any \pfd persistence module $M$, we have $\ShiftMap{\B(M)}{\delta}=\IndMat(\ShiftMap{M}{\delta})$.
\end{lemma}

\begin{proof}[Proof of the induced matching theorem (\cref{Thm:IMT_New_Formulation})]
We prove (i); the proof dualizes to a proof of (ii).  
Write $s = \ShiftMap{M}{\delta}$, and
let $f=i\circ q$ and $s=j\circ r$ be the epi-mono factorizations.  By \cref{prop:imageSandwich}, 
we obtain an epimorphism $p: \im f \twoheadrightarrow \im s$ such that
the following diagram commutes:
\[
\begin{tikzcd}
M \ar[r,two heads,"r"]\ar[rr,bend left,"s"]\ar[dd,bend right=40,"f",swap] \ar[d,two heads,"q"] & \im s \ar[r,hook,"j"] & M(\delta) \\
\im f \ar[d,hook,"i"] \ar[ur,two heads,"p",swap,dashed]  \\
N
\end{tikzcd}
\]

By  \cref{prop:SubmoduleStructureCat} and the way we construct induced matchings, we have epi-mono factorizations $\IndMat(f)=\IndMat(i)\circ \IndMat(q)$ and $\IndMat(s) = \IndMat(j)\circ \IndMat(r)$.
Moreover,
$\IndMat$ is functorial on epimorphisms by \cref{prop:SubmoduleStructureCat}, so the following diagram also commutes:
\[
\begin{tikzcd}
\B(M) \ar[r,two heads,"\IndMat(r)"]\ar[rr,bend left,"\IndMat(s)"]\ar[dd,bend right=50,"\IndMat(f)",swap] \ar[d,two heads,"\IndMat(q)"] & \B(\im s) \ar[r,hook,"\IndMat(j)"] & \B(M(\delta)) \\
\B(\im f) \ar[d,hook,"\IndMat(i)"] \ar[ur,two heads,"\IndMat(p)",swap,dashed]  \\
\B(N)
\end{tikzcd}
\]
By \cref{Lem:Matching_Induced_By_Shifts}, we have $\ShiftMap{\B(M)}{\delta} = \IndMat(s)$.
Thus, since epi-mono factorizations are unique (up to unique isomorphism), we have
$
\im \ShiftMap{\B(M)}{\delta} =
\B(\im s),
$
and $\IndMat(r)$ is the image epimorphism $\B(M)\twoheadrightarrow \im \ShiftMap{\B(M)}{\delta}$.  Since $\IndMat(r)=\IndMat(q)\circ \IndMat(p)$,  \cref{prop:imageSandwich} now gives that $\ker \IndMat(f)$ is $\delta$-trivial, as desired.
\end{proof}

\subsection{Converse to the Induced Matching Theorem}\label{Sec:Converse_IMT}
Letting $\kVect$ denote the category of (not necessarily finite dimensional) vector spaces over the field~$\K$.  We have a functor $\mathrm{Fr}:\MatchingCat\to \kVect$, which takes a set $S$ to the vector space with basis $S$.  Let $\zeta:\Barc\to \kVect$  denote the functor which sends a barcode $\C$ to $\mathrm{Fr}\circ E(\C)$.  

It is easy to prove the following converse to the induced matching theorem:
\begin{proposition}\label{Prop:Conv_IMT}
\mbox{}
\begin{enumerate}[(i)]
\item $\zeta(\B(M))\cong M$ for any \pfd persistence module $M$.
\item If $f:\mathcal C \to \mathcal D$ is a morphism in $\Barc$ with $\delta$-trivial (co)kernel, then $\zeta(f)$ has $\delta$-trivial (co)kernel as well.
\end{enumerate}
\end{proposition}

\section{Interleavings of barcodes and the bottleneck distance}\label{Sec:Interleavings_On_Barcodes}
In this section, we consider interleavings and the bottleneck distance on barcodes.  We observe that the bottleneck distance can be interpreted as an interleaving distance, and we prove the algebraic stability theorem.

\subsection{Interleavings}
\paragraph{Interleavings of $\RCat$-indexed diagrams}\label{SubSec:Interleavings_On_Barcodes}

The definition of interleavings of $\RCat$-indexed diagrams was introduced in \cite{chazal2009proximity}, building on ideas in \cite{cohen2007stability}, and was first stated in categorical language in \cite{Bubenik2014Categorification}.  Though interleavings over more general indexing categories can be defined and are also of interest in TDA \cite{lesnick2015theory,Bubenik2015Generalized,de2016categorified,curry2014sheaves}, we focus here on the $\RCat$-indexed case.  We use the definitions and notation introduced in \cref{Sec:Characterization}.

\begin{definition}[Interleavings and interleaving distance]\label{Def:Interleaving_Distance}
A $\delta$-\emph{interleaving} between two diagrams $M, N: \RCat \to \A$ is a pair of natural transformations \[f: M \to N(\delta),\quad g: N \to M(\delta)\] such that $g(\delta) \circ f = \ShiftMap{M}{2\delta}$ and $f(\delta) \circ g = \ShiftMap{N}{2\delta}$.  We call $f$ and $g$ \emph{$\delta$-interleaving morphisms}.

The interleaving distance on objects of $\A^\RCat$ is then given by 
\[d_I(M,N):=\inf\, \{\delta\geq 0\mid M\textup{ and }N\textup{ are }\delta\textup{-interleaved}.\}\]
\end{definition}

\paragraph{Interleavings in $\Barc$}
Note that as for natural transformations of $\RCat$-indexed diagrams, an overlap matching $f:\C\to \D$ induces an overlap matching $f(\delta):\C(\delta)\to \D(\delta)$.  We define a $\delta$-interleaving between barcodes $\C$ and $\D$ to be a pair of overlap matchings \[f:\C\matching  \D(\delta),\qquad g:\D\matching  \C(\delta)\]
such that 
$g(\delta)\barccomp f=\ShiftMap{\C}{2\delta}$, and $f(\delta)\barccomp g=\ShiftMap{\D}{2\delta}$.  This definition is equivalent to the definition of interleavings in $\MatchingCat^\RCat$ in the sense that a pair of overlap matchings $f,g$ is a $\delta$-interleaving if and only if the pair $\barcToDgm(f)$, $\barcToDgm(g)$ is a $\delta$-interleaving in $\MatchingCat^\RCat$.

\paragraph{Interleavings and Smallness of Kernels}

It is easily checked that for  $\A$ a Puppe-exact category, a $\delta$-interleaving morphism $f: M \to N(\delta)$ has $2\delta$-trivial kernel and cokernel. 
The converse is not true in general; one can easily construct a counterexample in the case that $\A$ is the category of persistence modules.  However, the converse holds in the two cases studied in this paper:

\begin{proposition}
\label{prop:interleavingTriviality}
In both the categories $\kvect^\RCat$ and $\Barc$, two objects $M,N$ are $\delta$-interleaved if and only if there exists a morphism $f: M \to N(\delta)$ with $2\delta$-trivial kernel and cokernel.
\end{proposition}

The statement of \cref{prop:interleavingTriviality} for $\kvect^\RCat$ first appeared as \cite[Corollary~6.6]{bauer2015induced}.

\begin{proof}
The result for $\Barc$ follows easily from \cref{Prop:Triviality_Kers_Cokers_of_Matching_Diagram}.

To prove the result for $\kvect^\RCat$, we apply both the induced matching and converse algebraic stability theorems: If $f: M \to N(\delta)$ is a morphism with $2\delta$-trivial kernel and cokernel, then by \cref{Thm:IMT_New_Formulation}, $\IndMat(f)$ has the same property.  Hence $\IndMat(f)$ is a $\delta$-interleaving morphism.  The converse direction of \cref{Thm:AST_New_Formulation} (whose easy proof we give below) then tells us that $M$ and $N$ are $\delta$-interleaved. 
\end{proof}

\subsection{Algebraic Stability}\label{Sec:Algebraic_Stability}

\paragraph{Bottleneck Distance}

For $I\subset \R$ an interval and $\delta\geq 0$, let the interval $U_\delta(I)$ be given by \[U_\delta(I):=\{t\in \R\mid \exists\, s\in I\textup{ with }|s-t|\leq \delta\}.\]  
We define a \emph{$\delta$-matching} between barcodes $\mathcal C$ and $\mathcal D$ to be a (not necessarily overlap) matching $\sigma:\C \matching \D$ with the following two properties:
\begin{enumerate}[(i)]
\item $\sigma$ matches each interval in $\C \cup \D$ that is not $2\delta$-trivial,
\item if 
$\sigma(I)=J$, then
$I\subset U_\delta(J)$ and 
$J\subset U_\delta(I)$.
\end{enumerate}
We define the \emph{bottleneck distance $d_B$} by taking
\[d_B(\C,\D):=\inf\, \{\delta\geq 0\mid\exists\textup{ a $\delta$-matching between $\C$ and $\D$}\}.\]
\paragraph{Interleaving distance equals bottleneck distance on barcodes}
For $\D$ any barcode, let $r_\delta:\D(\delta)\to \D$ be the obvious bijection.

\begin{proposition}\label{Prop:dI=db}
An overlap matching of barcodes $f:\C\matching\D(\delta)$ is a $\delta$-interleaving morphism if and only if $r_\delta\circ f$ is a $\delta$-matching.  In particular, for any barcodes $\C$ and $\D$, \[d_I(\C,\D)=d_B(\C,\D).\]
\end{proposition}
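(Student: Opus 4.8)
The plan is to establish the displayed biconditional in each direction and then read off the distance equality. A preliminary observation that organizes everything: each of the relevant predicates --- ``$I$ overlaps $J$ above'', ``$I$ is $\delta$-trivial'', ``$I\subseteq\ex^\delta(J)$'' --- is equivalent to a system of inequalities among $\inf$ and $\sup$ of the intervals in question (with the usual care about attained endpoints), and one has $\inf I(\delta)=\inf I-\delta$, $\sup I(\delta)=\sup I-\delta$, and $\ex^\delta(I)=[\inf I-\delta,\ \sup I+\delta]$. Thus the whole argument is bookkeeping with such inequalities. Here ``$f$ is a $\delta$-interleaving morphism'' means there is $g:\D\matching\C(\delta)$ with $(f,g)$ a $\delta$-interleaving.

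\emph{Forward direction.} Assume $(f,g)$ is a $\delta$-interleaving and put $\sigma:=r^\delta\circ f$, so that $\sigma(I)=J$ iff $f(I)=J(\delta)$. For condition~(i) of a $\delta$-matching: if $I\in\C$ is not $2\delta$-trivial, then $\varphi^\C(2\delta)=g\barccomp f$ matches $I$, which forces $f$, hence $\sigma$, to match $I$; dually, if $J\in\D$ is not $2\delta$-trivial then $\varphi^\D(2\delta)=f\barccomp g$ matches $J$, forcing $g(J)=K(\delta)$ and $f(K)=J(\delta)$ for some $K$, so that $\sigma(K)=J$ and $\sigma$ matches $J$. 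For condition~(ii), suppose $\sigma(I)=J$. Since $f$ is an overlap matching, $I$ overlaps $J(\delta)$ above, giving $\inf J-\delta\le\inf I$ and $\sup J-\delta\le\sup I$. If at least one of $I,J$ is not $2\delta$-trivial, then the argument just used for~(i), together with injectivity of $\sigma$, shows $g(J)=I(\delta)$; since $g$ is an overlap matching, $J$ overlaps $I(\delta)$ above, which supplies the complementary inequalities $\sup I-\delta\le\sup J$ and $\inf I-\delta\le\inf J$. The four inequalities together are exactly $I\subseteq\ex^\delta(J)$ and $J\subseteq\ex^\delta(I)$. If instead $I$ and $J$ are both $2\delta$-trivial, I derive the same four inequalities from the two coming from $f$, the length bounds $\sup I-\inf I\le 2\delta$ and $\sup J-\inf J\le 2\delta$, and the existence (using $I\cap J(\delta)\neq\emptyset$) of $t_0\in I$ with $t_0+\delta\in J$.

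\emph{Reverse direction.} Assume $f:\C\matching\D(\delta)$ is an overlap matching and $\sigma:=r^\delta\circ f$ is a $\delta$-matching. Define $g:\D\matching\C(\delta)$ by $g(J):=I(\delta)$ whenever $\sigma(I)=J$ and at least one of $I,J$ is not $2\delta$-trivial, and $g(J)$ undefined otherwise; injectivity of $\sigma$ makes $g$ a well-defined matching. It is an overlap matching: when $\sigma(I)=J$ with $I$ or $J$ not $2\delta$-trivial, the $\delta$-matching inequalities for $(I,J)$ already give $\sup I(\delta)\le\sup J$ and $\inf I(\delta)\le\inf J$, and a short case split on which of $I,J$ is non-trivial (combined with the length bound for that interval) yields $J\cap I(\delta)\neq\emptyset$, so $J$ overlaps $I(\delta)$ above. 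Finally the interleaving identities follow by unwinding the definitions: $g\barccomp f$ can only match $I$ to $I(2\delta)$, and it does so iff $\sigma$ matches $I$ and $I$ is not $2\delta$-trivial; condition~(i) for $\sigma$ makes the first clause redundant, so $g\barccomp f=\varphi^\C(2\delta)$, and symmetrically $f\barccomp g=\varphi^\D(2\delta)$. Hence $(f,g)$ is a $\delta$-interleaving.

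\emph{Distance equality and main difficulty.} From the forward direction, every $\delta$-interleaving of $\C$ and $\D$ produces a $\delta$-matching, so $d_B(\C,\D)\le d_I(\C,\D)$. Conversely, given a $\delta$-matching $\sigma$, pass to $\sigma':=\{(I,J)\in\sigma\mid I\text{ or }J\text{ not }2\delta\text{-trivial}\}$; since only pairs of $2\delta$-trivial intervals are removed, $\sigma'$ is still a $\delta$-matching, and by the same endpoint case split used for $g$ above, $\sigma'(I)=J$ implies $I$ overlaps $J(\delta)$ above, so $f:=(r^\delta)^{-1}\circ\sigma'$ is an overlap matching with $r^\delta\circ f=\sigma'$; the reverse direction then yields a $\delta$-interleaving, whence $d_I(\C,\D)\le d_B(\C,\D)$. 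I expect the genuine obstacles to be, first, the systematic care needed for attained-versus-unattained endpoints throughout (equivalently, carrying along the decorated-endpoint formalism of \cite{bauer2015induced}), so that every inequality above has the correct strictness; and second --- the one step that is not mere unwinding --- the both-$2\delta$-trivial subcase of condition~(ii), where $g$ need not be the reverse of $f$ on such pairs, so the complementary endpoint bounds cannot be imported and must instead be squeezed out of $2\delta$-triviality directly.
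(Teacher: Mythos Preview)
Your argument is correct. The biconditional is established carefully in both directions, and the both-$2\delta$-trivial subcase of condition~(ii) --- which you rightly flag as the only step that is not pure unwinding --- goes through with the chain of inequalities you indicate. Your treatment of the distance equality is also correct, and in fact more scrupulous than the paper's: you notice that an arbitrary $\delta$-matching $\sigma$ need not have $(r^\delta)^{-1}\circ\sigma$ an overlap matching (pairs of $2\delta$-trivial intervals can fail the intersection condition), and you repair this by passing to $\sigma'$.

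The route, however, differs from the paper's. The paper does not compute with endpoints at all; instead it factors the biconditional through an intermediate characterization:
\[
f\text{ is a }\delta\text{-interleaving morphism}\ \Longleftrightarrow\ \ker f\text{ and }\coker f\text{ are }2\delta\text{-trivial}\ \Longleftrightarrow\ r^\delta\circ f\text{ is a }\delta\text{-matching},
\]
appealing to the explicit description of kernels and cokernels in $\Barc$ given in \cref{Prop:Kers_Cokers_of_Barcodes}. This buys conceptual economy --- the whole proof is two sentences, and the endpoint bookkeeping is absorbed into the earlier proposition --- at the cost of relying on that proposition. Your approach is self-contained and makes the construction of the interleaving partner $g$ explicit, which is arguably more informative; the price is the case analysis you carry out by hand. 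Both arguments are sound, and yours has the minor advantage of spelling out the $\sigma\mapsto\sigma'$ step needed for $d_I\le d_B$, which the paper's ``in particular'' leaves to the reader.
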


\begin{proof}
According to \cref{prop:interleavingTriviality}, an overlap matching $f:\C\matching \D(\delta)$ is a $\delta$-interleaving morphism if and only if $f$ has $2\delta$-trivial kernel and cokernel.  In addition, it is easy to check that an overlap matching $f:\C\matching\D(\delta)$ has $2\delta$-trivial kernel and cokernel if and only if $r_\delta\circ f$ is a $\delta$-matching. 
\end{proof}

\begin{figure}
\centering

\begin{tikzpicture}[scale=0.7,baseline=(bottom),x=1cm,y=1cm]

\fill[myblue,opacity=.1] (6,6.75) -- (6.75,6.75) -- (8,8) -- (8,8.75) -- (6,8.75);
\draw[thick,dash pattern=on 0pt off 2pt,line cap=round,thick] (7,7.75) -- (7,7);
\fill[myblue] (7,7.75) circle [radius=3pt];

\fill[myred,opacity=.1] (1.2,7.3) rectangle (3.2,9.3);
\fill[myblue,opacity=.1] (3,8.75) rectangle (1,6.75);
\draw[dash pattern=on 0pt off 2pt,line cap=round,thick] (2,7.75) -- (2,7.3) -- (1.2,7.3) -- (2.2,8.3);
\fill[myblue] (2,7.75) circle [radius=3pt];
\fill[mygreen] (2,7.3) circle [radius=3pt];
\fill[myyellow] (1.2,7.3) circle [radius=3pt];
\fill[myred] (2.2,8.3) circle [radius=3pt];

\fill[myred,opacity=.1] (2.5,4) -- (4,4) -- (4.5,4.5) -- (4.5,6) -- (2.5,6);
\fill[myblue,opacity=.1] (4.1,6.1) rectangle (2.1,4.1);
\draw[dash pattern=on 0pt off 2pt,line cap=round,thick] (3,5.1) -- (3,4) -- (2.5,4) -- (3.5,5);
\fill[myblue] (3,5.1) circle [radius=3pt];
\fill[mygreen] (3,4) circle [radius=3pt];
\fill[myyellow] (2.5,4) circle [radius=3pt];
\fill[myred] (3.5,5) circle [radius=3pt];

\fill[myred,opacity=.1] (1,1.4) -- (1.4,1.4) -- (3,3) -- (3,3.4) -- (1,3.4);
\draw[dash pattern=on 0pt off 2pt,line cap=round,thick] (1.4,1.4) -- (1,1.4) -- (2,2.4);
\fill[myyellow] (1,1.4) circle [radius=3pt];
\fill[myred] (2,2.4) circle [radius=3pt];

\draw[->] (0,0) -- (0,9);
\draw[->] (0,0) -- (9,0);
\draw[dashed,lightgray] (0,0) -- (9,9);

\node[myblue] at (9,4) [right] {$\B(M)$};

\node[mygreen] at (9,3) [right] {$\B(\im f)$};

\node[myyellow] (bottom) at (9,2) [right] {$\B(N(\delta))$};

\node[myred] (bottom) at (9,1) [right] {$\B(N)$};

\end{tikzpicture}
\caption{Illustration of the proof of algebraic stability via induced matchings.  For a $\delta$-interleaving morphism of persistence modules $f: M \to N$, the barcodes of $M$, $\im f$, $N(\delta)$, and $N$ are shown as \emph{persistence diagrams}, which are multisets of points in the plane whose coordinates correspond to the left and right endpoints of the intervals. The dotted lines in the figure depict the induced matching $\B(M) \to \B(\im f) \to \B(N(\delta)) \to \B(N)$.  The shaded box around each point $p\in\B(M)\cup\B(N)$ indicates the set of points to which $p$ can match in a $\delta$-matching.}
\label{fig:AST_example}
\end{figure}

We are now deduce the algebraic stability theorem as a corollary of the induced matching theorem.  \cref{fig:AST_example} illustrates the barcode matching underlying the argument.

\begin{proof}[Proof of Algebraic Stability (\cref{Thm:AST_New_Formulation})]
The forward direction follows almost immediately from the induced matching theorem: If there exists a $\delta$-interleaving morphism $f:M\to N(\delta)$, then $f$ has $2\delta$-trivial kernel and cokernel.  By \cref{Thm:IMT_New_Formulation}, the same is true for $\IndMat(f):\B(M)\to \B(N(\delta))$.  Since $\B(N(\delta))=\B(N)(\delta)$, \cref{prop:interleavingTriviality} tells us that $\B(M)$ and $\B(N)$ are $\delta$-interleaved in $\Barc$.

The proof of converse algebraic stability is nearly trivial: Given a $\delta$-interleaving \[f:\B(M)\to \B(N)(\delta),\quad g:\B(N)\to \B(M)(\delta),\] $\zeta(f)$ and $\zeta(g)$ form a $\delta$-interleaving in $\kvect^\RCat$; here  $\zeta$ is the functor defined in \cref{Sec:Converse_IMT}.  By \cref{Prop:Conv_IMT}\,(i) then, $M$ and $N$ are $\delta$-interleaved.  
\end{proof}

\section{Constructing barcodes and induced matchings directly in $\MatchingCat^\RCat$}
\label{sec:rank_barcode}
In this section, we consider the construction of barcodes of persistence modules and induced matchings directly in the category of matching diagrams $\MatchingCat^\RCat$.  
Our barcode constructions come in two dual (canonically isomorphic) variants, which are readily extended to functors on epis and monos, respectively.  
These functors are equivalent to the induced matchings for epis and monos described in \cref{Sec: Induced_Matcings_Monos_Epis} and lead naturally to an alternate proof of \cref{prop:SubmoduleStructureCat}.  %

Let $M$ be a \pfd persistence module.  We now construct a matching diagram $\barcEpi(M)$ equivalent to $\B(M)$ in a way that depends only on the ranks of the internal maps of $M$.
While our construction does not require an interval decomposition of $M$, the intuition is best conveyed by assuming initially that we have this.

Order the intervals in $\R$ lexicographically, first by increasing lower bound, then (for intervals with the same lower bound) by decreasing upper bound, as shown in \cref{fig:lex_barcode}.
\begin{figure}
\centering
\begin{tikzpicture}[scale=1,baseline=(current bounding box.north),x=4ex,y=-2.5ex]
\draw[gray] (2.5,-1) -- (2.5,6);
\node[gray] (s) at (2.5,6.5) {$t$};
\draw[gray] (4,-1) -- (4,6);
\node[gray] (t) at (4,6.5) {$u$};
\draw[ultra thick,myblue] (0,0) -- (5,0);
\draw[ultra thick,myblue] (0,1) -- (3.25,1);
\draw[ultra thick,myblue] (2,2) -- (6,2);
\draw[ultra thick,myblue] (2,3) -- (4.5,3);
\draw[ultra thick,myblue] (2,4) -- (3.5,4);
\draw[ultra thick,myblue] (3,5) -- (7,5);
\node[circle, myblue, thick, draw, fill=white, scale=.8, inner sep=1, minimum size=10pt] at (2.5,0) {1};
\node[circle, gray, draw, fill=white, scale=.8, inner sep=1, minimum size=10pt] at (2.5,1) {2};
\node[circle, myblue, thick, draw, fill=white, scale=.8, inner sep=1, minimum size=10pt] at (2.5,2) {3};
\node[circle, myblue, thick, draw, fill=white, scale=.8, inner sep=1, minimum size=10pt] at (2.5,3) {4};
\node[circle, gray, draw, fill=white, scale=.8, inner sep=1, minimum size=10pt] at (2.5,4) {5};
\node[circle, myblue, thick, draw, fill=white, scale=.8, inner sep=1, minimum size=10pt] at (4,0) {1};
\node[circle, myblue, thick, draw, fill=white, scale=.8, inner sep=1, minimum size=10pt] at (4,2) {2};
\node[circle, myblue, thick, draw, fill=white, scale=.8, inner sep=1, minimum size=10pt] at (4,3) {3};
\node[circle, gray, draw, fill=white, scale=.8, inner sep=1, minimum size=10pt] at (4,5) {4};
\end{tikzpicture}
\caption{Example illustrating the matching diagram $\barcEpi(M)$ on top of the barcode $\B(M)$,
with intervals ordered lexicographically by increasing lower bound and deceasing upper bound.
Each set $\barcEpi(M)_t \subset \mathbb N$ is identified by an order-preserving bijection with the intervals of $\B(M)$ containing $t$.
The example shows a matching of cardinality $3$, corresponding to $\rank M_{t,u} = 3$.
\label{fig:lex_barcode}
}
\end{figure}
For example, with respect to this order, we have $[0,3]<[1,2]<(1,3]<(1,2]$.  Now at each index $t$, enumerate the intervals of $\B(M)$ containing~$t$ in that order.  This defines a canonical bijection $g_t \colon \barcToDgm(\B(M))_t \to \barcEpi(M)_t$ between the set $\barcToDgm(\B(M))_t$, consisting of the intervals in $\B(M)$ containing $t$,
and the set \[\barcEpi(M)_t := \{1 ,2, \dots , \dim M_t \}.\]
For any two indices $t \leq u$, let
\[\barcEpi(M)_{t,u} := g_u^{-1} \circ \barcToDgm(\B(M))_{t,u} \circ g_t.\]
Thus, $\barcEpi(M)_{t,u}$ is the matching between the sets $\barcEpi(M)_t$ and $\barcEpi(M)_u$ such that under the bijections $g_t$ and $g_u$, matched pairs correspond to intervals of $\B(M)$ containing both $t$ and $u$; 
see \cref{fig:lex_barcode}.
By construction, the matchings $(\barcEpi(M)_{t,u})_{t\leq u\in \R}$ form  a functor $\barcEpi(M):\RCat\to \MatchingCat$, and bijections $(g_t)_{t\in \R}$ form a natural isomorphism of matching diagrams $g:\barcToDgm(\B(M))\to \barcEpi(M)$.
A dual construction, denoted by $\barcMono$, is obtained by ordering the intervals in $\R$ lexicographically by decreasing upper bound, then by increasing lower bound. 
We summarize:

\begin{proposition}\label{Two_Matching_Dgms_Naturally_Isomorphic}
The matching diagrams $\barcEpi(M)$ and $\barcMono(M)$ are naturally isomorphic to $\barcToDgm(\B(M))$.
\end{proposition}

In a similar spirit, we can also map the induced matchings for epimorphisms and monomorphisms to equivalent morphisms of matching diagrams.
Consider an epimorphism $f : M \twoheadrightarrow N$ and $\barcEpi(M), \barcEpi(N)$ as above.
Now, for any $t \in \R$, let $g_t \colon \barcToDgm(\B(M))_t \to \barcEpi(M)_t$ and $h_t \colon \barcToDgm(\B(N))_t \to \barcEpi(N)_t$ be the canonical bijections described above.
Using the induced matching $\IndMat(f) \colon \B(M) \to \B(N)$ from \cref{Sec: Induced_Matcings_Monos_Epis}, define
\[\barcEpi(f)_{t} := h_u^{-1} \circ \barcToDgm(\IndMat(f))_{t} \circ g_t.\]
See \cref{fig:lex_barcode_epi} for an example.
For a monomorphism $f$, we define $\barcMono(f)_{t}$ in an analogous way.
\begin{figure}
\centering
\begin{tikzpicture}[scale=1,baseline=(current bounding box.north),x=4ex,y=-5ex]
\draw[gray] (2.5,-0.5) -- (2.5,6);
\node[gray] (s) at (2.5,6.5) {$t$};
\fill[mylightgray] (0,0) rectangle (3.5,0.45);
\fill[mylightgray] (0,1) rectangle (1.75,1.45);
\fill[mylightgray] (2,2) rectangle (5,2.45);
\fill[mylightgray] (2,3) rectangle (3,3.45);
\fill[mylightgray] (3,5) rectangle (5,5.45);
\draw[ultra thick,myblue] (0,0) -- (5,0);
\draw[ultra thick,myblue] (0,1) -- (3.25,1);
\draw[ultra thick,myblue] (2,2) -- (6,2);
\draw[ultra thick,myblue] (2,3) -- (4.5,3);
\draw[ultra thick,myblue] (2,4) -- (3.5,4);
\draw[ultra thick,myblue] (3,5) -- (7,5);
\draw[ultra thick,myyellow] (0,0.45) -- (3.5,0.45);
\draw[ultra thick,myyellow] (0,1.45) -- (1.75,1.45);
\draw[ultra thick,myyellow] (2,2.45) -- (5,2.45);
\draw[ultra thick,myyellow] (2,3.45) -- (3,3.45);
\draw[ultra thick,myyellow] (3,5.45) -- (5,5.45);

\node[circle, myblue, thick, draw, fill=white, scale=.8, inner sep=1, minimum size=10pt] at (2.5,0) {1};
\node[circle, gray, draw, fill=white, scale=.8, inner sep=1, minimum size=10pt] at (2.5,1) {2};
\node[circle, myblue, thick, draw, fill=white, scale=.8, inner sep=1, minimum size=10pt] at (2.5,2) {3};
\node[circle, myblue, thick, draw, fill=white, scale=.8, inner sep=1, minimum size=10pt] at (2.5,3) {4};
\node[circle, gray, draw, fill=white, scale=.8, inner sep=1, minimum size=10pt] at (2.5,4) {5};
\node[circle, myyellow, thick, draw, fill=white, scale=.8, inner sep=1, minimum size=10pt] at (2.5,0.45) {1};
\node[circle, myyellow, thick, draw, fill=white, scale=.8, inner sep=1, minimum size=10pt] at (2.5,2.45) {2};
\node[circle, myyellow, thick, draw, fill=white, scale=.8, inner sep=1, minimum size=10pt] at (2.5,3.45) {3};
\end{tikzpicture}
\caption{Example illustrating the induced epimorphism of 
matching diagrams $\barcEpi(f) : \barcEpi(M) \twoheadrightarrow \barcEpi(N)$ and the corresponding overlap matching of barcodes $\B(M) \twoheadrightarrow \B(N)$.
\label{fig:lex_barcode_epi}
}
\end{figure}
Similarly to the above, we obtain:
\begin{proposition}\label{Induced_Matchings_Naturally_Isomorphic}
For an epimorphism $f : M \twoheadrightarrow N$,
the morphism $\barcEpi(f)$ is naturally isomorphic to $\barcToDgm(\IndMat(f))$.
Similarly, for a monomorphism $f$,
the morphism $\barcMono(f)$ is naturally isomorphic to $\barcToDgm(\IndMat(f))$.
\end{proposition}

Next, we describe the matching $\barcEpi(M)_{t,u}$ directly in terms of the internal maps of $M$, avoiding the explicit use of the barcode $\B(M)$.

\begin{proposition}\label{Def:MatchingCatBarc}
For $M: {\cat R} \to \cat{Vect}$ be a \pfd persistence module and 
$t \leq u \in \R$, we have
\begin{align*}
\barcEpi(M)_{t,u} &= \left\{(i,j) \in 
\mathbb N^2
\mid j \leq \rank M_{t,u} , \, i = j + \max\,\{\rank M_{s,t} - \rank M_{s,u} \mid s < t, \rank M_{s,u} < j \} \right\}  , 
\\
\barcMono(M)_{t,u} &= \left\{(i,j) \in 
\mathbb N^2
\mid i \leq \rank M_{t,u} , \, j = i + \max\,\{\rank M_{u,v} - \rank M_{t,v} \mid v > u, \rank M_{t,v} < i \} \right\}  ,
\end{align*}
(where the maximum over an empty set is taken to be $0$).
\end{proposition}
\begin{proof}

\goodbreak

We first observe that the image of $\barcEpi(M)_{t,u}$ is precisely the set \(
\{j \mid j \leq \rank M_{t,u}\}.
\)
In order to determine for a given matched number $j \in \im \barcEpi(M)_{t,u}$ the corresponding number $i \in \barcEpi(M)_t$ to which is it matched, we further observe that the difference $i-j$ is precisely the number of intervals of $\B(M)$ that 
\begin{enumerate}[(a)]
\item are born before the $j^{\mathrm{th}}$ interval of $\B(M)$ containing $u$  (in the lexicographic order), and
\item die after $t$ and before $u$.
\end{enumerate}
Letting $I$ be the $j^{\mathrm{th}}$~interval of $\B(M)$ containing $u$, 
the set of lower bounds of $I$ in $\R$ (i.e., the set of values $s \in \R$ satisfying  $s < r$ for all $r \in I$) is
\[
\{s < u \mid \rank M_{s,u} < j\}.
\]
If $j\in  \im \barcEpi(M)$, then $t\in I$, and clearly all such lower bounds $s$ satisfy $s<t$.  Hence, the number of intervals that are born before~$I$ and die after $t$ is \[\max_s \,\{\rank M_{s,t} \mid s < t,\, \rank M_{s,u} < j \},\] and similarly, the number of intervals born before $I$ which die after $u$ is \[\max_s \,\{\rank M_{s,u} \mid s < t,\, \rank M_{s,u} < j \}.\]
It follows that 
\begin{align*}
i - j &= \max_s \,\{\rank M_{s,t} \mid s < t,\, \rank M_{s,u} < j \}- \max_s \,\{\rank M_{s,u} \mid s < t,\, \rank M_{s,u} < j \}\\
      &=\max_s\,\{\rank M_{s,t} - \rank M_{s,u} \mid s < t,\, \rank M_{s,u} < j \}.
\end{align*}
The result follows.
\end{proof}

In an analogous way, we also obtain formulas for $\barcEpi(f)_{t}$ and $\barcMono(f)_{t}$ in terms of the morphism $f$ and the internal maps, which can be proven in a similar way.

\begin{proposition}
\label{prop:episFunctor}
\mbox{}
\begin{enumerate}[(i)]
\item Let $f: M \twoheadrightarrow N$ be an epimorphism of \pfd persistence modules.  For $t\in \R$, we have 
\[
\barcEpi(f)_{t} = \left\{(i,j) \in \mathbb N^2 \mid j \leq \dim N_{t} , \, i = j + \max\,\{\rank M_{s,t} - \rank N_{s,t} \mid s < t, \rank N_{s,t} < j \} \right\}.
\]
\item Let $f: M \twoheadrightarrow N$ be a monomorphism of \pfd persistence modules.  For $t\in \R$, we have 
\[
\barcMono(f)_{t} = \left\{(i,j) \in \mathbb N^2 \mid i \leq \dim M_{t} , \, j = i + \max\,\{\rank N_{s,t} - \rank M_{s,t} \mid s < t, \rank M_{s,t} < i \} \right\}.
\]
\end{enumerate}
\end{proposition}

Note that these formulas rely only on the existence of an epimorphism or monomorphism; the right hand sides depend only on the ranks of the internal maps of $M$ and $N$, not on the morphism $f$.

We have seen that the formulas of \cref{Def:MatchingCatBarc} define functors $\barcEpi(M), \barcMono(M) \colon \RCat\to \MatchingCat$, and it is clear from \cref{Two_Matching_Dgms_Naturally_Isomorphic} that each of these functors encodes $\rank M_{u,t}$ for all $u\leq t\in \R$.  In the sprit of constructing and studying $\barcEpi(M)$ and $\barcMono(M)$ in a way that is independent of the structure theorem, we next give elementary proofs of these facts, proceeding directly from the description of $\barcEpi(M)$ and $\barcMono(M)$ in terms of ranks.

\begin{proposition}
\label{prop:epiMonoMatchingDgms}
Let $M: {\cat R} \to \cat{Vect}$ be a \pfd persistence module. 
\begin{enumerate}[(i)]
\item For all $t\leq u\in \R$, the relations $\barcEpi(M)_{t,u}$ and $\barcMono(M)_{t,u} $ of \cref{Def:MatchingCatBarc} are both order-preserving matchings.  In particular, for $t < u$, \[\card \barcEpi(M)_{t,u} = \card \barcMono(M)_{t,u} = \rank M_{t,u} .\]
\item The sets and matchings of \cref{Def:MatchingCatBarc} are functorial, i.e., they define functors 
\begin{align*}
\barcEpi(M),\barcMono(M)&: \cat R \to \cat{Mch}.
\end{align*}
\end{enumerate}
\end{proposition}

\begin{proof}

We prove the results for $\barcEpi(M)$ only, the proof for $\barcMono(M)$ being completely analogous.
Let 
$(i,j), (m,n) \in \barcEpi(M)_{t,u}$.
Clearly $j=n$ implies $i=m$.
Moreover, if $j < n$, then from
\begin{align*}
i = j + \max\,\{\rank M_{s,t} - \rank M_{s,u} \mid s < t, \rank M_{s,u} < j \} , \\
m = n + \max\,\{\rank M_{s,t} - \rank M_{s,u} \mid s < t, \rank M_{s,u} < n \} .
\end{align*}
we obtain $i < m$.
Thus $\barcEpi(M)_{t,u}$ is an order-preserving matching.

In order to show functoriality of $\barcEpi(M)$, we first establish that for all $s < t \leq u$, we have
$\rank M_{s,t} < i$ if and only if $\rank M_{s,u} < j$.  
To see this, note that for all $s < u$ with $\rank M_{s,u} < j$, we have $s < t$ and
\[i \geq j + (\rank M_{s,t} - \rank M_{s,u}) > \rank M_{s,t}.\] 
Conversely, for all $r < u$ with $\rank M_{r,u} \geq j$, we have $s < r$ for all $s < t$ with $\rank M_{s,u} < j$,
which in turn by elementary linear algebra yields
\[\rank M_{s,t} - \rank M_{s,u} 
= \dim (\im M_{s,t} \cap \ker M_{t,u})
\leq \dim (\im M_{r,t} \cap \ker M_{t,u})
=
\rank M_{r,t} - \rank M_{r,u}\]
and thus
\[i = j + \max\{\rank M_{s,t} - \rank M_{s,u} \mid s < t, \rank M_{s,u} < j \} \leq j + (\rank M_{r,t} - \rank M_{r,u}) \leq \rank M_{r,t} .\]
We conclude that $\rank M_{s,t} < i $ if and only if $\rank M_{s,u} < j$.

It remains to show that $(i,k) \in \barcEpi(M)_{t,v}$ if and only if $(i,j) \in \barcEpi(M)_{t,u}$
and
$(j,k) \in \barcEpi(M)_{u,v}$ for some $j \in \barcEpi(M)_{u}$. 
First let
$(i,j) \in \barcEpi(M)_{t,u}$
and
$(j,k) \in \barcEpi(M)_{u,v}$.
By the above we have
$\rank M_{s,u} < j$
if and only if
$\rank M_{s,v} < k$,
and so substituting
\[j = k + \max\,\{\rank M_{s,u} - \rank M_{s,v} \mid s < t, \rank M_{s,v} < k \}\]
gives
\begin{align*}
i = k &+ \max\, \{\rank M_{s,u} - \rank M_{s,v} \mid s < t, \rank M_{s,v} < k \} \\&+ \max\,\{\rank M_{s,t} - \rank M_{s,u} \mid s < t, \rank M_{s,u} < j \} 
\\
= k &+ \max\,\{\rank M_{s,t} - \rank M_{s,v} \mid s < t, \rank M_{s,v} < k \}  ,
\end{align*}
which is equivalent to $(i,k) \in \barcEpi(M)_{t,v}$.
Conversely, given $(i,k) \in \barcEpi(M)_{t,v}$, the above equation for $j$ yields $(i,j) \in \barcEpi(M)_{t,u}$
and
$(j,k) \in \barcEpi(M)_{u,v}$.
We conclude that $\barcEpi(M)$ is a functor ${\cat R} \to \MatchingCat$.
\end{proof}

Similarly, we can also show directly from the description of \cref {prop:episFunctor} that $\barcEpi(f)$ and $\barcMono(f)$ are natural transformations, turning $\barcEpi$ and $\barcMono$ into functors.
We omit the proof, which is essentially the same as the proof of \cref{prop:epiMonoMatchingDgms}.

\goodbreak

\begin{proposition}
\label{prop:episToEpis}
\mbox{}
\begin{enumerate}[(i)]
\item Let $f: M \twoheadrightarrow N$ be an epimorphism.  Then, for all $t$, $\barcEpi(f)_{t}$ is an order-preserving epimorphism in $\MatchingCat$.  Moreover, these matchings are natural, so they define an epimorphism \[\barcEpi(f):\barcEpi(M) \twoheadrightarrow \barcEpi(N)\]
in a functorial way, i.e., $\barcEpi(g \circ f) = \barcEpi(g) \circ \barcEpi(f)$ for any epimorphism $g: N \twoheadrightarrow O$.
\item Let $f: M \hookrightarrow N$ be a monomorphism.  Then, for all $t$, $\barcMono(f)_{t}$ is an order-preserving monomorphism in $\MatchingCat$.  Moreover, these matchings are natural, so they define a monomorphism \[\barcMono(f):\barcMono(M) \hookrightarrow \barcMono(N).\]
in a functorial way, i.e., $\barcMono(g \circ f) = \barcMono(g) \circ \barcMono(f)$ for any monomorphism $g: N \hookrightarrow O$.
\end{enumerate}
\end{proposition}

\begin{remark}
As an aside, we note that the formulas of \cref{prop:episFunctor,prop:epiMonoMatchingDgms} extend to any \emph{q-tame} persistence module $M$ (i.e., one for which $\rank(M_{s,t})<\infty$ whenever $s<t$), even though the usual structure theorem for \pfd persistence modules does not extend to the q-tame setting \cite{chazal2012structure}.  However, since in this setting \cref{Two_Matching_Dgms_Naturally_Isomorphic} does not apply, it is not guaranteed that the resulting matching diagrams $\barcEpi(M)$ and $\barcMono(M)$ are isomorphic.
\end{remark}

\begin{remark}[Matchings induced by arbitrary morphisms]
While $\barcEpi(M)$ and $\barcMono(M)$ are typically not equal, we have seen above that there is a distinguished isomorphism from each of these matching diagrams to $\barcToDgm(\B(M))$.  This in turn gives us a distinguished isomorphism \[\zeta_M:\barcEpi(M)\to \barcMono(M).\]   Using this, we can define the matching $\barcEpi(M)\to \barcEpi(N)$ induced by a morphism $f:M\to N$ of \pfd persistence modules as the composition of matching diagrams
\[
\begin{tikzcd}
\barcEpi(M) \ar[r,"\barcEpi(q)"] & \barcEpi(\im f) \ar[r,"\zeta_{\im f}"] & \barcMono(\im f) \ar[r,"\barcMono(i)"] &  \barcMono(N)  \ar[r,"\zeta_N^{-1}"] & \barcEpi(N),
\end{tikzcd}
\]
where $f=i\circ q$ is the epi-mono factorization of $f$.  By construction, this is equivalent to the induced matching $\IndMat(f)$ in $\Barc$.  A definition of the matching $\barcMono(M)\to \barcMono(N)$ induced by $f$ can be given in a similar way.  

Because $\zeta_{\im f}$ and $\zeta_{N}$ are defined in terms of barcodes, our definition of the matching $\barcEpi(M)\to \barcEpi(N)$ induced by $f$ is defined in terms of barcodes as well.  This is at odds with the goal of giving a barcode-free construction of induced matchings directly in $\MatchingCat^\RCat$, as we have done when restricting attention to monos or epis $f$.  However, we do not see a simple way to define the matching $\barcEpi(M)\to \barcEpi(N)$ induced by an arbitrary morphism $f$ without appealing to the connection with barcodes.  This suggests to us that to define matchings induced by arbitrary morphisms, it is more natural to work in the category $\Barc$, as we have done elsewhere in this paper, than to work in $\MatchingCat^\RCat$.

\end{remark}

\section{Discussion}
In this paper, we have established some basic facts about the category $\Barc\cong \MatchingCat^\RCat$ of barcodes and used these observations to give simple new formulations of the induced matching and algebraic stability  theorems.  We have seen that the new formulations lead to variant of the proof of the induced matching theorem which emphasizes the preservation of categorical structure.

In fact, our definition of the category $\Barc$ extends to barcodes indexed over arbitrary posets, as defined in \cite{Botnan2018Algebraic}, and many of the ideas presented here extend either to arbitrary posets or to $\RCat^n$-indexed barcodes for any $n$.  In particular, \cref{Prop:dI=db} extends to $\RCat^n$-indexed barcodes, and this provides alternative language for expressing generalized algebraic stability results appearing in \cite{Botnan2018Algebraic,bjerkevik2016stability}.  While it remains to be seen what role the categorical viewpoint on barcodes might play in the further development of TDA theory, we hope that it 
might offer some perspective on how algebraic stability ought to generalize to other settings.

As already mentioned, our new formulations of the algebraic stability and induced matching theorems make clear that both results can be interpreted as the preservation of some categorical structure as we pass from $\kvect^\RCat$ to $\Barc$.  Can more of interest be said about how the passage from persistence modules to barcodes preserves categorical structure?  
We wonder whether our results can be understood as part of a larger story about how homological algebra in the Abelian category $\kvect^\RCat$ relates to homological algebra in $\Barc$.

\section*{Acknowledgements}
This research has been supported by the DFG Collaborative Research Center SFB/TRR 109 ``Discretization in Geometry and Dynamics'', NIH grant T32MH065214, and an award from the J.\@ Insley Blair Pyne Fund.

\bibliographystyle{abbrvnaturl}
{\small \bibliography{DaD_Refs} }

\begin{thebibliography}{19}
\providecommand{\natexlab}[1]{#1}
\providecommand{\url}[1]{\texttt{#1}}
\expandafter\ifx\csname urlstyle\endcsname\relax
  \providecommand{\doi}[1]{doi: #1}\else
  \providecommand{\doi}{doi: \begingroup \urlstyle{rm}\Url}\fi

\bibitem[Bauer and Lesnick(2015)]{bauer2015induced}
U.~Bauer and M.~Lesnick.
\newblock \href{http://dx.doi.org/10.20382/jocg.v6i2a9}{Induced matchings and
  the algebraic stability of persistence barcodes}.
\newblock \emph{Journal of Computational Geometry}, 6\penalty0 (2):\penalty0
  162--191, 2015.

\bibitem[Bjerkevik(2016)]{bjerkevik2016stability}
H.~B. Bjerkevik.
\newblock \href{http://arxiv.org/abs/1609.02086}{Stability of
  higher-dimensional interval decomposable persistence modules}.
\newblock Preprint, 2016.
\newblock \href {http://arxiv.org/abs/1609.02086} {\path{arXiv:1609.02086}}.

\bibitem[Botnan and Lesnick(2018)]{Botnan2018Algebraic}
M.~B. Botnan and M.~Lesnick.
\newblock \href{https://msp.org/agt/2018/18-6/p01.xhtml}{Algebraic stability of
  zigzag persistence modules}.
\newblock \emph{Algebraic \& Geometric Topology}, 18:\penalty0 3133--3204,
  2018.

\bibitem[Bubenik and Scott(2014)]{Bubenik2014Categorification}
P.~Bubenik and J.~A. Scott.
\newblock \href{http://dx.doi.org/10.1007/s00454-014-9573-x}{Categorification
  of persistent homology}.
\newblock \emph{Discrete \& Computational Geometry}, 51\penalty0 (3):\penalty0
  600--627, 2014.

\bibitem[Bubenik et~al.(2015)Bubenik, de~Silva, and
  Scott]{Bubenik2015Generalized}
P.~Bubenik, V.~de~Silva, and J.~Scott.
\newblock \href{http://dx.doi.org/10.1007/s10208-014-9229-5}{Metrics for
  generalized persistence modules}.
\newblock \emph{Foundations of Computational Mathematics}, 15\penalty0
  (6):\penalty0 1501--1531, 2015.

\bibitem[Cavanna et~al.(2015)Cavanna, Jahanseir, and
  Sheehy]{cavanna2015geometric}
N.~J. Cavanna, M.~Jahanseir, and D.~R. Sheehy.
\newblock \href{http://arxiv.org/abs/1506.03797}{A geometric perspective on
  sparse filtrations}.
\newblock Preprint, 2015.
\newblock \href {http://arxiv.org/abs/1506.03797} {\path{arXiv:1506.03797}}.

\bibitem[Chazal et~al.(2009)Chazal, Steiner, Glisse, Guibas, and
  Oudot]{chazal2009proximity}
F.~Chazal, D.~C. Steiner, M.~Glisse, L.~J. Guibas, and S.~Y. Oudot.
\newblock \href{http://dx.doi.org/10.1145/1542362.1542407}{Proximity of
  persistence modules and their diagrams}.
\newblock In \emph{Proceedings of the 25th Annual Symposium on Computational
  Geometry}, SCG~'09, pages 237--246. ACM, 2009.

\bibitem[Chazal et~al.(2016)Chazal, de~Silva, Glisse, and
  Oudot]{chazal2012structure}
F.~Chazal, V.~de~Silva, M.~Glisse, and S.~Oudot.
\newblock \href{http://dx.doi.org/10.1007/978-3-319-42545-0}{\emph{The
  Structure and Stability of Persistence Modules}}.
\newblock SpringerBriefs in Mathematics. Springer, 2016.

\bibitem[Cohen-Steiner et~al.(2007)Cohen-Steiner, Edelsbrunner, and
  Harer]{cohen2007stability}
D.~Cohen-Steiner, H.~Edelsbrunner, and J.~Harer.
\newblock \href{http://dx.doi.org/10.1007/s00454-006-1276-5}{Stability of
  persistence diagrams}.
\newblock \emph{Discrete \& Computational Geometry}, 37\penalty0 (1):\penalty0
  103--120, 2007.

\bibitem[Crawley-Boevey(2015)]{crawley2015decomposition}
W.~Crawley-Boevey.
\newblock \href{http://dx.doi.org/10.1142/S0219498815500668}{Decomposition of
  pointwise finite-dimensional persistence modules}.
\newblock \emph{Journal of Algebra and Its Applications}, 14\penalty0
  (5):\penalty0 1550066+, 2015.

\bibitem[Curry(2014)]{curry2014sheaves}
J.~Curry.
\newblock \href{http://arxiv.org/abs/1303.3255}{\emph{Sheaves, Cosheaves and
  Applications}}.
\newblock PhD thesis, University of Pennsylvania, 2014.
\newblock \href {http://arxiv.org/abs/1303.3255} {\path{arXiv:1303.3255}}.

\bibitem[de~Silva et~al.(2016)de~Silva, Munch, and Patel]{de2016categorified}
V.~de~Silva, E.~Munch, and A.~Patel.
\newblock \href{http://dx.doi.org/10.1007/s00454-016-9763-9}{Categorified
  {Reeb} graphs}.
\newblock \emph{Discrete \& Computational Geometry}, 55\penalty0 (4):\penalty0
  854--906, 2016.

\bibitem[Edelsbrunner et~al.(2015)Edelsbrunner, Jab{\l}o\'{n}ski, and
  Mrozek]{edelsbrunner2013persistent}
H.~Edelsbrunner, G.~Jab{\l}o\'{n}ski, and M.~Mrozek.
\newblock \href{http://dx.doi.org/10.1007/s10208-014-9223-y}{The persistent
  homology of a self-map}.
\newblock \emph{Foundations of Computational Mathematics}, 15\penalty0
  (5):\penalty0 1213--1244, 2015.

\bibitem[Fasy et~al.(2014)Fasy, Lecci, Rinaldo, Wasserman, Balakrishnan, and
  Singh]{balakrishnan2013statistical}
B.~T. Fasy, F.~Lecci, A.~Rinaldo, L.~Wasserman, S.~Balakrishnan, and A.~Singh.
\newblock \href{http://dx.doi.org/10.1214/14-AOS1252}{Confidence sets for
  persistence diagrams}.
\newblock \emph{The Annals of Statistics}, 42\penalty0 (6):\penalty0
  2301--2339, 2014.

\bibitem[Grandis(2012)]{grandis2012homological}
M.~Grandis.
\newblock \emph{Homological Algebra: The interplay of homology with
  distributive lattices and orthodox semigroups}.
\newblock World scientific, 2012.

\bibitem[Lesnick(2015)]{lesnick2015theory}
M.~Lesnick.
\newblock \href{http://dx.doi.org/10.1007/s10208-015-9255-y}{The theory of the
  interleaving distance on multidimensional persistence modules}.
\newblock \emph{Foundations of Computational Mathematics}, 15\penalty0
  (3):\penalty0 613--650, 2015.

\bibitem[Mitchell(1965)]{mitchell1965theory}
B.~Mitchell.
\newblock \emph{Theory of categories}.
\newblock Academic Press, 1965.

\bibitem[Puppe(1962)]{Puppe1962Korrespondenzen}
D.~Puppe.
\newblock \href{https://doi.org/10.1007/BF01438388}{{Korrespondenzen in
  abelschen Kategorien}}.
\newblock \emph{Mathematische Annalen}, 148\penalty0 (1):\penalty0 1--30, 1962.

\bibitem[Sheehy(2013)]{Sheehy2013Linear}
D.~R. Sheehy.
\newblock \href{http://dx.doi.org/10.1007/s00454-013-9513-1}{{Linear-Size}
  approximations to the {Vietoris–Rips} filtration}.
\newblock \emph{Discrete \& Computational Geometry}, 49\penalty0 (4):\penalty0
  778--796, 2013.

\end{thebibliography}

\end{document}